\newtheorem{theorem}{Theorem}[section]
\newtheorem{lemma}[theorem]{Lemma}
\newtheorem{corollary}[theorem]{Corollary}
\newtheorem{proposition}[theorem]{Proposition}
\newtheorem*{question*}{Question}
\theoremstyle{definition}
\newtheorem*{definition*}{Definition}
\theoremstyle{remark}
\newtheorem*{remark*}{Remark}
\newtheorem{example}{Example}[section]
\numberwithin{equation}{section}
\newcommand {\K}{\mathbb{K}} 
\newcommand {\N}{\mathbb{N}} 
\newcommand {\Z}{\mathbb{Z}} 
\newcommand{\FF}{\mathcal{F}}
\newcommand{\PP}{\mathcal{P}}
\DeclareMathOperator{\mdim}{mdim}
\begin{document}
\title{A Garden of Eden theorem for linear subshifts}

\author{Tullio Ceccherini-Silberstein}
\address{Dipartimento di Ingegneria, Universit\`a del Sannio, C.so
Garibaldi 107, 82100 Benevento, Italy}
\email{tceccher@mat.uniroma1.it}
\author{Michel Coornaert}
\address{Institut de Recherche Math\'ematique Avanc\'ee,
                                            Universit\'e  de Strasbourg,
                                                 7 rue Ren\'e-Descartes,
                                               67084 Strasbourg Cedex, France  }
\email{coornaert@math.u-strasbg.fr}
\subjclass[2000]{37B15, 68Q80, 43A07}
\keywords{linear Cellular automaton, linear subshift, strongly irreducible subshift, subshift of finite type, L-surjunctive linear subshift,  amenable group, Garden of Eden theorem}
 \begin{abstract}
Let $G$ be an amenable group and let $V$ be a finite-dimensional vector space over an arbitrary field $\K$. 
  We prove that if $X \subset V^G$ is a strongly irreducible linear subshift of finite type and 
  $\tau \colon X \to X$ is a linear cellular automaton, then $\tau$ is surjective if and only if it is 
  pre-injective.
We also prove that if $G$ is countable and $X \subset V^G$ is a strongly irreducible linear subshift,
then every injective linear cellular automaton $\tau \colon X \to X$ is surjective. 
     \end{abstract}

\maketitle

\section{Introduction}
\label{sec:introduction}

The goal of this article is to give a version of the Moore-Myhill Garden of Eden theorem 
for linear cellular automata  defined over   certain   linear subshifts.
Before stating our main results, let us briefly recall some basic notions from symbolic dynamics.
 \par 
Consider a group $G$ and a set $A$.
The set $A^G  = \{x  \colon G \to A\}$ is called the set of \emph{configurations} over the group $G$ and the \emph{alphabet} $A$.
We equip $A^G = \prod_{g \in G} A$ with its \emph{prodiscrete} topology, i.e., with the product topology obtained by taking the discrete topology on each factor $A$ of $A^G$. 
We also endow $A^G$ with
the left action of $G$ defined by $gx(h) = x(g^{-1}h)$ for all 
$g,h \in G$ and $x \in A^G$.
This action is continuous with respect to the prodiscrete topology and is called the $G$-\emph{shift} action on $A^G$. It is customary to refer to  the $G$-space $A^G$ as the
\emph{full shift} over the group $G$ and the alphabet $A$.
\par
A closed $G$-invariant subset of $A^G$ is called a \emph{subshift}.
\par
For $x \in A^G$ and $\Omega \subset G$, let $x\vert_\Omega$ denote the restriction of $x$ 
to $\Omega$.
One says that a subshift $X \subset A^G$ is \emph{irreducible} if,
for every finite subset $\Omega \subset G$ and any two configurations  $x_1$ and $x_2$ in  $X$, there exist an element $g \in G$ and a configuration $x \in X$ such that 
$x\vert_\Omega = x_1\vert_\Omega$ and $x\vert_{g\Omega} = x_2\vert_{g\Omega}$.   
\par
Given a finite subset $\Delta \subset G$, one says that a
subshift $X \subset A^G$ is $\Delta$-irreducible if the following condition is satisfied:
if $\Omega_1$ and $\Omega_2$ are finite subsets of $G$ such that there is no element 
$g \in \Omega_2$ such that the set $g\Delta$ meets $\Omega_1$ then, given any two configurations $x_1,x_2 \in X$, there exists a configuration $x \in X$ such that
$x\vert_{\Omega_1} = x_1\vert_{\Omega_1}$  and $x\vert_{\Omega_2} = x_2\vert_{\Omega_2}$. 
A subshift $X \subset A^G$ is said to be \emph{strongly irreducible} if there exists a finite subset $\Delta \subset G$ such that $X$ is $\Delta$-irreducible  (cf. \cite[Definition 4.1]{fiorenzi-strongly}).
Note that if $G$ is infinite then every strongly irreducible subshift is irreducible.
A trivial example of a strongly irreducible subshift is provided by the full shift $A^G$ which is 
$\Delta$-irreducible for $\Delta = \{1_G\}$.
\par
If $D$ is a finite subset of $G$ and $L$ is a subset of $A^D$,   then
\begin{equation}
\label{e:def-X-d-P}
X_G(D,L) = \{ x \in A^G : (g^{-1}x)\vert_D \in L \text{ for all } g \in G \}
\end{equation}
is clearly a subshift of $A^G$.
A subshift $X \subset A^G$ is said to be \emph{of finite type} if there exist a finite subset $D \subset G$ and a subset $L \subset A^D$ such that $X = X_G(D,L)$.
 One then says that the finite subset $D \subset G$ is a \emph{defining window} and that
  $L \subset A^D$ is a \emph{defining law}, relative to the defining window $D$, for the subshift $X$.
  Note that the full shift $A^G$ is a subshift of finite type of itself admitting $D = \{1_G\}$ as a defining window with defining   law $L = A^D \cong A$. 
  \par
 A map $\tau \colon X \to Y$ between subshifts $X,Y \subset A^G$ is called a \emph{cellular automaton} if there exist a finite subset $M \subset G$ 
and a map $\mu \colon  A^M \to A$ such that 
\begin{equation} 
\label{e;local-property}
\tau(x)(g) = \mu( (g^{-1}x)\vert_M)  \quad  \text{for all } x \in X \text{ and } g \in G.
\end{equation}
Such a set $M$ is then called a \emph{memory set} and $\mu$ is called a \emph{local defining map} for $\tau$. It immediately follows from the above definition that every cellular automaton $\tau \colon X \to Y$ is continuous and $G$-equivariant.
\par 
If $\tau \colon X \to X$ is a cellular automaton from a subshift $X \subset A^G$ into itself, a configuration 
$x_0 \in X$ is called a 
\emph{Garden of Eden configuration} for $\tau$ if $x_0$ is not in the image of $\tau$.
The origin of this biblical terminology comes from the fact that a configuration $x_0 \in X$ is a Garden of Eden configuration for $\tau$ if and only if,
whatever the choice  of an initial configuration $x \in X$, the sequence of its iterates $x, \tau(x), \tau^2(x), \dots,\tau^n(x),\dots$ can only take the value $x_0$ at time $n = 0$.
\par
Two configurations in $A^G$ are said to be \emph{almost equal} if they coincide outside a finite subset of $G$.
  One says that a cellular automaton $\tau \colon X \to Y$ between subshifts $X,Y \subset A^G$ is 
  \emph{pre-injective} if whenever two configurations $x_1,x_2 \in X$ are almost equal and
 satisfy $\tau(x_1) = \tau(x_2)$ then one has $ x_1 = x_2$.
Injectivity clearly implies pre-injectivity but there are pre-injective cellular automata which are not injective.
\par
The classical \emph{Garden of Eden theorem} \cite{ceccherini} states that if $\tau \colon A^G \to A^G$   is a cellular automaton defined on the full shift over an amenable group $G$ and a finite alphabet $A$, 
then the surjectivity of   $\tau$ (i.e.,  the absence of Garden of Eden configurations for $\tau$)
is equivalent to its  pre-injectivity  (see Subsection \ref{ss;amen} for the definition of amenability).
\par
The Garden of Eden theorem was extended by F.~Fiorenzi
to cellular automata $\tau \colon X \to X$ for subshifts $X \subset A^G$ with $A$ finite in the following two cases:
(1) $G = \Z$ and $X \subset A^\Z$ is an irreducible subshift of finite type \cite[Corollary 2.19]{fiorenzi-sofic};
(2)  $G$ is a finitely generated amenable group and $X \subset A^G$ is a strongly irreducible subshift of finite type \cite[Corollary 4.8]{fiorenzi-strongly}. 
\par
Now let $G$ be a group, $\K$  a field, and $V$ a vector space over $\K$.
Then there is a natural product vector space structure on $V^G$ and
the shift action of $G$ on $V^G$ is clearly $\K$-linear with respect to this vector space structure.
One says that a subshift $X \subset V^G$ is a \emph{linear subshift}
if $X$ is a  vector subspace of $V^G$.
Given linear subshifts $X,Y \subset V^G$,
a cellular automaton $\tau \colon X \to Y$ is called a \emph{linear cellular automaton} if the map $\tau$ is $\K$-linear.
Note that if $X,Y \subset V^G$ are linear subshifts and $\tau \colon X \to Y$ is a linear cellular automaton, then the pre-injectivity of $\tau$ is equivalent to the fact that the zero configuration 
is the unique configuration with finite support lying in the kernel of $\tau$.
 \par 
In \cite[Theorem 1.2]{garden} and \cite[Corollary 1.4]{induction}, 
we proved the following linear version of the Garden of Eden theorem:

\begin{theorem}
\label{t:goe-lca-full-shift}
 Let $G$ be an amenable group, $\K$ a field, and $V$ a finite-dimensional  vector space over $\K$.
Let  $\tau \colon V^G \to V^G$ be a linear cellular automaton.
Then $\tau$ is surjective if and only if it is pre-injective. 
\end{theorem}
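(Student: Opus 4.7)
The plan is to prove Theorem~\ref{t:goe-lca-full-shift} via the notion of \emph{mean dimension}. Fix a F\o lner net $(F_i)_{i \in I}$ in $G$, which exists by amenability. For any $G$-invariant $\K$-linear subspace $X \subset V^G$ and finite $F \subset G$, let $X_F$ denote the image of the restriction map $X \to V^F$. Since the function $F \mapsto \dim_\K X_F$ is subadditive and translation-invariant, an Ornstein--Weiss argument yields existence (and independence of the F\o lner net) of the limit
\[
\mdim(X) = \lim_i \frac{\dim_\K X_{F_i}}{|F_i|} \in [0,\dim_\K V].
\]
Two facts underpin the proof: (a) $\mdim(V^G) = \dim_\K V$; and (b) if $Y \subsetneq V^G$ is a proper closed $G$-invariant $\K$-linear subspace, then $\mdim(Y) < \dim_\K V$. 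Fact (a) is immediate, while fact (b) uses that $Y$ being closed and proper forces $Y_D \subsetneq V^D$ for some finite window $D$, producing a local dimensional deficit that survives translation-invariant averaging. A standard Mittag--Leffler argument (using that $V$ is finite-dimensional) shows that $\tau(V^G)$ is itself a closed $G$-invariant linear subspace, so $\mdim(\tau(V^G))$ is well-defined.

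The strategy is now to show that both non-surjectivity and non-pre-injectivity of $\tau$ are equivalent to the strict inequality $\mdim(\tau(V^G)) < \dim_\K V$. The equivalence with non-surjectivity is immediate from (a) and (b). For non-pre-injectivity, fix a memory set $M$ for $\tau$. Suppose first that $z \in \ker \tau$ is nonzero and finitely supported, with support $E$. By $G$-equivariance, $\tau(gz) = 0$ for every $g \in G$, and the translate $gz$ (restricted to $FM$) lies in the kernel of the induced finite-dimensional linear map $\widetilde{\tau}_F \colon V^{FM} \to V^F$ whenever $gE \subset FM$. A greedy packing argument produces at least $c|F|$ such translates $g$ (with $c = c(E,M) > 0$) for which the supports $gE$ are pairwise disjoint; the corresponding $gz$ are then linearly independent elements of $\ker \widetilde{\tau}_F$, so rank--nullity gives $\dim \tau(V^G)_F \leq \dim_\K V \cdot |FM| - c|F|$, and $|FM|/|F| \to 1$ yields the mean dimension deficit.

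Conversely, if the deficit $\mdim(\tau(V^G)) \leq \dim_\K V - \varepsilon$ holds, consider the linear map $\Psi_F \colon V^F \to V^{FM^{-1}}$ sending $v$ to $\tau(\hat{v})|_{FM^{-1}}$, where $\hat{v}$ is the extension of $v$ by zero. Since $\hat{v}$ is supported in $F$, $\tau(\hat{v})$ is supported in $FM^{-1}$, so $v \in \ker \Psi_F$ forces $\tau(\hat{v}) = 0$ globally. The inclusion $\Psi_F(V^F) \subset \tau(V^G)_{FM^{-1}}$ gives $\dim \Psi_F(V^F) \leq (\dim_\K V - \varepsilon)|FM^{-1}|$; combined with $\dim V^F = \dim_\K V \cdot |F|$ and the F\o lner estimate $|FM^{-1}|/|F| \to 1$, this forces $\dim \ker \Psi_F > 0$ for sufficiently invariant $F$, producing a nonzero finitely supported configuration in $\ker \tau$.

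The main obstacle is fact (b): the strict drop in mean dimension for every proper closed $G$-invariant $\K$-linear subspace. While the local deficit $Y_D \subsetneq V^D$ is evident, amplifying it into a uniform asymptotic deficit in the Ornstein--Weiss limit requires controlling the overlaps between translated copies of the window $D$ and the F\o lner boundary via a quasi-tiling argument. This is precisely where amenability is essential; the counting arguments in the pre-injectivity dichotomy then reuse the same machinery in a more direct fashion.
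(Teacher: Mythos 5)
Your proposal follows essentially the same route as the paper, which obtains this statement as the special case $X=V^G$ of Theorem~\ref{t;main}: both arguments show that surjectivity and pre-injectivity of $\tau$ are each equivalent to $\mdim_\FF(\tau(V^G))=\dim(V)$, using the Mittag--Leffler closed-image property, the strict mean-dimension drop for proper closed $G$-invariant subspaces via tilings (Lemmas~\ref{l;tilings-exist} and~\ref{l;tiling}, Proposition~\ref{p;mdimY<mdimX}), and a rank--nullity count over F\o lner sets exactly as in Theorem~\ref{t;trois}. The only divergence is that your disjoint-support argument for ``non-pre-injective $\Rightarrow$ mean-dimension deficit'' exploits the equality $\dim(\pi_{FM}(V^G))=\dim(V)\cdot|FM|$, a legitimate simplification of the paper's hyperplane construction, which is needed there only because the domain is a general subshift.
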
   

 \par
The main result of the present paper is the following:
 
\begin{theorem}
\label{t;main}
Let $G$ be an amenable group, $\K$ a field, and $V$ a finite-dimensional  vector space over $\K$.
Let $X \subset V^G$ be a strongly irreducible linear subshift of finite type and let $\tau \colon X \to X$ be a linear cellular automaton.
Then $\tau$ is surjective if and only if it is pre-injective. 
\end{theorem}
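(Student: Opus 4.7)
My strategy is to adapt the mean-dimension proof of the Garden of Eden theorem for the full shift (Theorem~\ref{t:goe-lca-full-shift}, from \cite{garden}) to strongly irreducible linear subshifts of finite type. For any $G$-invariant $\K$-linear subspace $Y \subset V^G$ I would define the mean dimension
\[
\mdim(Y) := \lim_{i} \frac{\dim_\K \{y|_{\Omega_i} : y \in Y\}}{|\Omega_i|}
\]
along any right F{\o}lner net $(\Omega_i)$ in $G$; existence and independence of the F{\o}lner net follow from the Ornstein--Weiss lemma applied to the $G$-invariant subadditive set function $\Omega \mapsto \dim_\K \{y|_\Omega : y \in Y\}$, which is bounded by $(\dim_\K V)\cdot|\Omega|$. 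The theorem then reduces to two lemmas: \emph{Lemma A} asserts that $\tau$ is pre-injective if and only if $\mdim(\tau(X)) = \mdim(X)$, and \emph{Lemma B} asserts that any proper linear subshift $Y \subsetneq X$ satisfies $\mdim(Y) < \mdim(X)$. Combined with the closed image theorem for linear cellular automata over amenable groups (which guarantees that $\tau(X)$ is a linear subshift of $X$; I would import this from the authors' prior work such as \cite{induction}), these yield $\tau$ surjective $\Leftrightarrow \tau(X) = X \Leftrightarrow \mdim(\tau(X)) = \mdim(X) \Leftrightarrow \tau$ pre-injective.

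For Lemma~A, fix a memory set $M$ for $\tau$ and, for finite $\Omega \subset G$, set $\Omega^- := \{g \in \Omega : gM \subset \Omega\}$. The local structure of $\tau$ yields a surjective linear map $R_\Omega \colon \{x|_\Omega : x \in X\} \to \{\tau(x)|_{\Omega^-} : x \in X\}$, and since $|\Omega^-|/|\Omega| \to 1$ along a F{\o}lner net, rank--nullity gives $\mdim(X) - \mdim(\tau(X)) = \lim_\Omega (\dim_\K \ker R_\Omega)/|\Omega|$. If $\tau$ is not pre-injective, take $z \in X \setminus \{0\}$ with finite support $S$ and $\tau(z) = 0$; for $g$ in a large interior of $\Omega$ chosen so that the $gS$ are pairwise disjoint, the translates $gz$ lie in $\ker R_\Omega$ and are linearly independent, forcing $\mdim(\tau(X)) < \mdim(X)$. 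Conversely, when $\tau$ is pre-injective, $\tau$ embeds the space $X(\Omega) := \{x \in X : \mathrm{supp}(x) \subset \Omega\}$ into the space of $\tau(X)$-configurations supported on an inflation $\Omega^+ \supset \Omega$ with $|\Omega^+|/|\Omega| \to 1$; combining this with the estimate $\dim_\K X(\Omega) \geq \dim_\K \{x|_{\Omega^-} : x \in X\} - o(|\Omega|)$ (supplied by strong irreducibility and finite type of $X$) yields $\mdim(X) \leq \mdim(\tau(X))$.

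For Lemma~B, pick $x_0 \in X \setminus Y$; closedness of $Y$ supplies a finite $E \subset G$ with $x_0|_E \notin \{y|_E : y \in Y\}$. A greedy packing argument in any sufficiently invariant F{\o}lner set $\Omega$ produces a family $\{g_i E\}_{i \in I}$ of pairwise $\Delta$-separated translates of $E$ with $|I| \geq c|\Omega|$ for some constant $c > 0$ independent of $\Omega$ (the density estimate uses amenability of $G$). At each locus $g_i E$, strong irreducibility of $X$ together with the finite-type property allows planting, independently of the other loci, of a local $X$-pattern differing at $g_i E$ from every $Y$-pattern; collecting these independent choices gives a subspace of $\{x|_\Omega : x \in X\}/\{y|_\Omega : y \in Y\}$ of dimension at least $|I|$, whence $\mdim(Y) \leq \mdim(X) - c < \mdim(X)$.

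\textbf{Main obstacle.} The technical heart of the argument is the simultaneous use of strong irreducibility and finite type to produce independently controlled local modifications of configurations in $X$, both in the ``pre-injective $\Rightarrow$ equality'' direction of Lemma~A and in the planting step of Lemma~B. The most delicate step is the estimate $\dim_\K X(\Omega) \geq \dim_\K \{x|_{\Omega^-} : x \in X\} - o(|\Omega|)$ in Lemma~A, which requires ``capping off'' an arbitrary restriction on $\Omega^-$ by the zero configuration outside $\Omega$ using the gluing afforded by $\Delta$-irreducibility, while the finite type property upgrades local consistency to global membership in $X$. The remaining ingredients (rank--nullity, linear independence of support-translates, monotonicity of $\mdim$, and the Vitali-type packing estimate on F{\o}lner sets) are linear-algebraic or combinatorial and follow standard patterns once the above framework is in place.
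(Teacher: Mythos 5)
Your overall architecture coincides with the paper's: the same mean-dimension framework, your Lemma~B is Proposition~\ref{p;mdimY<mdimX}, your Lemma~A is Theorem~\ref{t;trois}, and the capping-off mechanism you single out as the main obstacle is exactly the paper's Lemma~\ref{lemma:strongly-irred-Omega2-inf} (interpolate to zero on an annulus by $\Delta$-irreducibility, then truncate and use the defining window to certify membership in $X$). Your treatment of the ``not pre-injective $\Rightarrow$ mean dimension drops'' direction, via rank--nullity for $R_\Omega$ together with linearly independent disjoint translates of a finitely supported kernel element, is a legitimate alternative to the paper's argument, which instead cuts $X$ down by hyperplanes $H_g \subset \pi_{g\Omega^{+M}}(X)$ chosen over an $(E,F)$-tiling and feeds the resulting subspace $Y$ with $\tau(Y)=\tau(X)$ into the technical Lemma~\ref{l:mdimY<mdimX-strongly}; your version is more direct and does not even use strong irreducibility for that implication.

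The genuine gap is the closed image property. You propose to import the fact that $\tau(X)$ is a closed linear subshift from the authors' prior work such as \cite{induction}. That cannot be done: those results establish closedness of the image only for linear cellular automata defined on the full shift $V^G$, and the image of the closed subspace $X$ under an extension $\widetilde{\tau}\colon V^G \to V^G$ of $\tau$ need not be closed merely because $\widetilde{\tau}(V^G)$ is. Closedness of $\tau(X)$ is indispensable in your scheme: without it, $\tau(X)$ could a priori be a proper dense $G$-invariant subspace of $X$ of full mean dimension, and your Lemma~B (which needs a finite window $E$ witnessing $\pi_E(\tau(X)) \subsetneqq \pi_E(X)$, available only when $\tau(X)$ is closed) could not be applied to conclude $\tau(X) = X$. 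The paper devotes Sections~\ref{sec:Mittag-Leffler-CIP} and~\ref{sec:cip-sflt} to precisely this point: for countable $G$ one runs a Mittag-Leffler argument (Lemma~\ref{l;ML}) on the projective system of nonempty finite-dimensional affine subspaces $X_n = \tau_n^{-1}(\pi_{B_n}(y))$ to produce a preimage of any $y$ in the closure of $\tau(X)$ (Theorem~\ref{t;closure-lin}), and for uncountable $G$ one uses the finite type hypothesis to factor $X$ and $\tau$ as products over the left cosets of the finitely generated subgroup generated by a memory set and a defining window (Proposition~\ref{p;factorization-subshift} and Theorem~\ref{t;closure-lin-2}), reducing to the countable case. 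This is a substantial new ingredient of the proof, not an import, and your plan is incomplete without it.
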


Note that Theorem \ref{t:goe-lca-full-shift} may be recovered from Theorem \ref{t;main} by taking 
$X = V^G$.
\par
A group $G$ is said to be \emph{surjunctive} \cite{gottschalk} if, for any finite
alphabet $A$, every injective cellular automaton $\tau \colon A^G \to A^G$ over $G$ is surjective.
It was shown by W. Lawton (cf. \cite{gottschalk}) that all residually finite groups are surjunctive.
On the other hand, as injectivity implies pre-injectivity, it immediately follows from the Garden of Eden Theorem \cite{ceccherini} that all amenable groups are surjunctive.
More generally,  M. Gromov \cite{gromov-esav} and B. Weiss \cite{weiss-sgds} proved that all 
\emph{sofic} groups are surjunctive.
The class of sofic groups includes in particular all residually amenable groups and therefore all residually finite groups as well as all amenable groups. 
As far as we know, there is no example of a non-surjunctive nor  even of a non-sofic group in the literature up to now.
\par
By analogy with the classical finite alphabet case, the following definition was introduced in \cite[Definition 1.1]{israel}.
A group $G$ is said to be \emph{L-surjunctive} if, for any field
$\K$ and any finite-dimensional vector space $V$ over
$\K$, every injective linear cellular automaton 
$\tau \colon V^G \to V^G$ is surjective.
It turns out (see \cite{israel}) that a group $G$ is L-surjunctive
if and only if $G$ satisfies Kaplansky's stable finiteness conjecture, that is,    the group algebra $\K[G]$ is stably finite for any field $\K$ (recall that a ring $R$ is said to be \emph{stably finite} if every one-sided invertible square matrix over $R$ is also two-sided invertible).
A linear analogue  of the Gromov-Weiss theorem, namely that all sofic groups are L-surjunctive,
was established in  \cite[Theorem 1.2]{israel}.
From this result we deduced that sofic groups satisfy the Kaplansky conjecture on the stable finiteness of group algebras, a result previously established -- with completely different methods
involving embeddings of the group rings into continuous von Neumann regular rings -- by G. Elek and 
A. Szab\'o \cite{es-direct}.
\par
Now, given a group $G$ and a vector space $V$ over a field $\K$, let us  say that a linear subshift 
$X \subset V^G$ is \emph{L-surjunctive} if every injective linear cellular automaton $\tau \colon X \to X$ is surjective.
An immediate  consequence of Theorem \ref{t;main} 
is the following:

 \begin{corollary}
\label{c:strongly-irr-flt-are-L-surj}
Let $G$ be an amenable group, $\K$ a field, and $V$ a finite-dimensional vector space over $\K$.
Then every strongly irreducible linear subshift of finite type $X \subset V^G$ is L-surjunctive. 
\end{corollary}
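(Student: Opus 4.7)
The plan is to observe that this corollary is an essentially one-line deduction from Theorem \ref{t;main}, and that the only content beyond that theorem is the standard implication that injectivity implies pre-injectivity for linear maps.

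First I would unwind the definition of L-surjunctive. Let $X \subset V^G$ be a strongly irreducible linear subshift of finite type, and let $\tau \colon X \to X$ be an injective linear cellular automaton. I need to show that $\tau$ is surjective.

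Next I would verify that $\tau$ is pre-injective. Suppose $x_1, x_2 \in X$ are almost equal configurations with $\tau(x_1) = \tau(x_2)$. Then $x_1 - x_2 \in X$ (since $X$ is a vector subspace) and $\tau(x_1 - x_2) = 0 = \tau(0)$ by linearity of $\tau$. Injectivity of $\tau$ gives $x_1 - x_2 = 0$, that is $x_1 = x_2$. Hence $\tau$ is pre-injective. (Equivalently, one may invoke the characterization recalled just before Theorem \ref{t:goe-lca-full-shift}: for a linear cellular automaton, pre-injectivity means the kernel contains no nonzero finitely-supported configuration, which is immediate from injectivity.)

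Finally, the hypotheses of Theorem \ref{t;main} are precisely satisfied: $G$ is amenable, $V$ is finite-dimensional over $\K$, and $X$ is a strongly irreducible linear subshift of finite type. Applying Theorem \ref{t;main} to the pre-injective linear cellular automaton $\tau$ yields that $\tau$ is surjective, which is the conclusion. There is no real obstacle here; all of the substantive work has already been carried out in the proof of Theorem \ref{t;main}, and the present statement is a formal consequence obtained by combining that theorem with the trivial implication ``injective $\Rightarrow$ pre-injective'' valid for linear maps.
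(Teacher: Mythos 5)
Your proposal is correct and follows exactly the route the paper intends: the corollary is stated as an immediate consequence of Theorem \ref{t;main}, obtained by noting that injectivity implies pre-injectivity (for linear maps, the kernel contains no nonzero configuration at all, let alone a finitely supported one) and then invoking the theorem. Nothing is missing.
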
 

In fact, when the amenable group $G$ is assumed to be countable, we can remove the hypothesis that the subshift $X$ is of finite type in the previous statement so that we get: 

 \begin{theorem}
\label{t:strongly-irr-are-L-surj}
Let $G$ be a countable amenable group, $\K$ a field, and $V$ a finite-dimensional vector space over $\K$.
Then every strongly irreducible linear subshift $X \subset V^G$   is L-surjunctive. 
\end{theorem}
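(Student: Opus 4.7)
The plan is to reduce Theorem \ref{t:strongly-irr-are-L-surj} to Theorem \ref{t;main} by approximating the strongly irreducible linear subshift $X$ from above by a decreasing sequence of strongly irreducible linear subshifts of finite type. The countability of $G$ is used essentially to construct this approximating sequence.

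Since $G$ is countable, enumerate $G = \{g_1, g_2, \ldots\}$, fix a finite $\Delta \subseteq G$ witnessing $\Delta$-irreducibility of $X$, and for each $n \geq 1$ set $F_n := \{g_1, \ldots, g_n\} \cup \Delta$ and
\[
X_n := X_G\bigl(F_n, \pi_{F_n}(X)\bigr),
\]
where $\pi_{F_n}(X) := \{x|_{F_n} : x \in X\} \subseteq V^{F_n}$ is the (finite-dimensional) vector subspace of patterns of $X$ on $F_n$. Routine checks show that each $X_n$ is a linear subshift of finite type with $X \subseteq X_{n+1} \subseteq X_n$, and that $\bigcap_n X_n = X$ (using that $X$ is closed in $V^G$ and that $\bigcup_n F_n = G$).

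The principal obstacle is to show that each $X_n$ is itself \emph{strongly irreducible}. The claim I would establish is that $X_n$ is $\Delta_n$-irreducible for a suitable $F_n$-thickening $\Delta_n$ of $\Delta$ (roughly $\Delta_n = F_n F_n^{-1} \Delta$). Given $x_1, x_2 \in X_n$ and finite sets $\Omega_1, \Omega_2 \subseteq G$ with $\Omega_1 \cap \Omega_2 \Delta_n = \emptyset$, the definition of $X_n$ supplies, for each $g \in \Omega_i F_n^{-1}$, a local witness $z_g \in X$ that matches $x_i$ on $gF_n$; iteratively patching these local $X$-witnesses via $\Delta$-irreducibility of $X$ yields a single $y \in X \subseteq X_n$ with $y|_{\Omega_i} = x_i|_{\Omega_i}$ for $i = 1,2$.

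With the approximation in hand, one deduces the theorem as follows. Let $\tau : X \to X$ be an injective linear cellular automaton with memory set $M$ and local defining map $\mu : V^M \to V$, and let $\tilde\tau : V^G \to V^G$ denote its linear extension. A direct calculation shows $\tilde\tau(X_N) \subseteq X_n$ whenever $F_n M \subseteq F_N$. Assuming for contradiction that $\tau$ is not surjective, pick $y_0 \in X \setminus \tau(X)$ and a finite $W \subseteq G$ with $y_0|_W \notin \pi_W(\tau(X))$. For $n$ large enough that $W \subseteq F_n$, one invokes Theorem \ref{t;main} on a $\tilde\tau$-invariant strongly irreducible linear SFT constructed from the $X_n$, using injectivity of $\tau|_X$ (hence pre-injectivity) to supply the pre-injectivity hypothesis and thereby obtain surjectivity at the SFT level. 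Combined with $\bigcap_n X_n = X$ and a compactness/diagonalization argument, this produces $x \in X$ with $\tau(x)|_W = y_0|_W$, contradicting the choice of $W$.
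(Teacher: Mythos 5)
Your strategy (approximate $X$ from above by strongly irreducible linear SFTs $X_n$ and invoke Theorem \ref{t;main}) is not the paper's route, and as written it has several genuine gaps, the first of which is the one you yourself flag as the principal obstacle. Your argument that $X_n = X_G(F_n,\pi_{F_n}(X))$ is strongly irreducible does not work: you claim to produce a witness $y \in X \subset X_n$ with $y\vert_{\Omega_i} = x_i\vert_{\Omega_i}$, but for $x_i \in X_n$ and $\Omega_i$ large, the pattern $x_i\vert_{\Omega_i}$ need not belong to $\pi_{\Omega_i}(X)$ at all (membership in $X_n$ only controls windows of size $F_n$), so no such $y$ in $X$ can exist in general; the witness must be sought in $X_n \setminus X$. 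Moreover, $\Delta$-irreducibility of $X$ only lets you merge \emph{two} configurations of $X$ supported on two $\Delta$-separated finite sets; it gives no mechanism for ``iteratively patching'' the heavily overlapping family of local witnesses $z_g$, $g \in \Omega_i^{+F_n}$, since after one merge the separation hypothesis is destroyed. (The statement can be salvaged for $G = \Z$ by an interval argument, but for a general amenable group the ``annulus'' on which the interpolating configuration must agree with $x_i$ is much larger than an $F_n$-window, and your hypotheses do not guarantee that $x_i$ restricted to it is an $X$-pattern; so the strong irreducibility of $X_n$ remains unproved.)

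Even granting that each $X_n$ is a strongly irreducible linear SFT, the endgame does not close. Theorem \ref{t;main} applies to a linear cellular automaton from one strongly irreducible linear SFT \emph{to itself}; what you have is $\widetilde\tau(X_N) \subset X_n$ with $N \neq n$, and no $\widetilde\tau$-invariant strongly irreducible SFT is exhibited. Worse, the pre-injectivity hypothesis is not supplied: injectivity of $\tau$ on $X$ says nothing about finitely supported kernel elements of $\widetilde\tau$ lying in $X_N \setminus X$. Finally, the concluding ``compactness/diagonalization'' is unavailable: since $\K$ (hence $V$) may be infinite, $V^G$ is not compact, and passing from ``$y_0\vert_W$ is attained for every finite $W$'' to ``$y_0$ is attained'' is precisely the closed-image problem that the paper resolves with the Mittag-Leffler lemma (Theorem \ref{t;closure-lin}). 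The paper's actual proof avoids SFT approximation entirely: it shows $\tau(X)$ is a linear subshift via Theorem \ref{t;closure-lin}, uses reversibility of bijective linear cellular automata over countable groups (Theorem \ref{t;reversible-countable}) to get $\mdim_\FF(\tau(X)) = \mdim_\FF(X)$, and then invokes the strict monotonicity of mean dimension on proper linear subshifts of a strongly irreducible linear subshift (Proposition \ref{p;mdimY<mdimX}) to force $\tau(X) = X$.
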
 

We do not know whether Theorem \ref{t:strongly-irr-are-L-surj} remains true if the countability assumption is removed.
\par
The paper is organized as follows.
Section \ref{sec:preliminaries} contains the necessary preliminaries and background material.
We recall in particular the definition and main properties of mean dimension for vector subspaces of configurations $X \subset V^G$, where $G$ is an amenable group and $V$ a finite-dimensional vector space.
In Section \ref{sec:mean-dim-strongly-irr} we study mean dimension of strongly irreducible linear subshifts. 
We  prove  that if $X \subset V^G$ is a strongly irreducible linear subshift  then the  mean dimension of $X$ is greater than the mean dimension of any proper linear subshift $Y \subsetneqq X$ 
(Proposition \ref{p;mdimY<mdimX}).
This result implies in particular that every nonzero strongly irreducible linear subshift has positive mean dimension (Corollary \ref{cor:positive-mdim-if-strongly}).
In Section \ref{sec:Mittag-Leffler-CIP} we use the Mittag-Leffler lemma for projective sequences of sets to prove that if $G$ is a countable group, $V$ a finite-dimensional vector space, and $X \subset V^G$ a linear subshift, then every linear cellular automaton
$\tau \colon X \to V^G$ has a closed image in $V^G$ for the prodiscrete topology.
This enables us to prove Theorem \ref{t:strongly-irr-are-L-surj}.
The closed image property of linear cellular automata is extended to possibly uncountable groups in Section \ref{sec:cip-sflt}
under the additional hypothesis that the source linear subshift $X$ has finite type.
 The proof of our Garden of Eden theorem (Theorem \ref{t;main}) is given in Section \ref{sec:proof-GOE}.
It consists in showing that both the surjectivity and the pre-injectivity of 
$\tau$ are equivalent to the fact that the linear subshifts $X$ and $\tau(X)$ have the same mean dimension (cf. Corollary \ref{c;deux}).
In the last two sections we describe some examples of linear cellular automata
which are either pre-injective but not surjective or surjective but not pre-injective.


\section{Preliminaries and background}
\label{sec:preliminaries}

In this section we collect some preliminaries and  background material that will be needed in the sequel.

\subsection{Neighborhoods} (see \cite[Section 2]{garden})
Let $G$ be a group. 
\par
Let $E$ and $\Omega$ be subsets of $G$. 
The   $E$-\emph{neighborhood}  of $\Omega$ is the subset $\Omega^{+E} \subset G$ consisting of all elements $g \in G$ such that
the set $gE$ meets $\Omega$. Thus, one has
$$
 \Omega^{+E} = \{g \in G : gE \cap \Omega \not= \varnothing\} = \bigcup_{e \in E}\Omega e^{-1} 
 = \Omega E^{-1}.
$$

\begin{remark*}
The definition of $\Delta$-irreducibility given in the introduction may be reformulated by saying that,
given a group $G$, a set $A$, and a finite subset $\Delta \subset G$,   a subshift $X \subset A^G$ 
is $\Delta$-\emph{irreducible} if the following condition is satisfied:
if $\Omega_1$ and $\Omega_2$ are finite subsets of $G$ such that 
$\Omega_1^{+\Delta} \cap \Omega_2 = \varnothing$ then, given any two configurations $x_1,x_2 \in X$, there exists a configuration $x \in X$ such that
$x$ coincides with $x_1$  on $\Omega_1$ and with $x_2$ on $\Omega_2$.
\end{remark*}

The following facts will be frequently used in the sequel:

\begin{proposition} 
\label{p:prop-int-ferm-bord}
Let $G$ be a group. 
Let $E$ and $\Omega$ be subsets of $G$. Then the following hold:
\begin{enumerate}[\rm (i)]
 \item 
if $1_G \in E$, then $  \Omega \subset \Omega^{+E}$;
\item 
if $g \in G$, then  $g(\Omega^{+E}) = (g\Omega)^{+E}$ so that we can omit parentheses
and simply write $g\Omega^{+E}$ instead;
\item
if $\Omega$ and $E$ are finite, then $\Omega^{+E}$ is finite. 
\end{enumerate}
\end{proposition}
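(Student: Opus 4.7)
The plan is to derive all three assertions from the closed-form expression $\Omega^{+E} = \Omega E^{-1}$ that appears in the definition itself, since every claim then becomes a routine statement about products of subsets in a group.

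For (i), if $1_G \in E$ and $\omega \in \Omega$, then $\omega = \omega \cdot 1_G^{-1}$ belongs to $\Omega E^{-1} = \Omega^{+E}$; equivalently, $\omega E$ contains $\omega$, hence meets $\Omega$. For (iii), the cardinality bound $|\Omega^{+E}| = |\Omega E^{-1}| \leq |\Omega|\,|E|$ immediately gives the finiteness conclusion whenever $\Omega$ and $E$ are finite.

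For (ii), I would use associativity of the group operation to compute
$$ g(\Omega^{+E}) = g(\Omega E^{-1}) = (g\Omega) E^{-1} = (g\Omega)^{+E}. $$
An equivalent element-wise check, which avoids appealing to the formula $\Omega^{+E} = \Omega E^{-1}$, is: $h \in g(\Omega^{+E})$ iff $g^{-1}h \in \Omega^{+E}$ iff $(g^{-1}h) E \cap \Omega \neq \varnothing$ iff (multiplying by $g$ on the left) $hE \cap g\Omega \neq \varnothing$ iff $h \in (g\Omega)^{+E}$.

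No step presents any real obstacle; the proposition is essentially a bookkeeping lemma whose purpose is to record the inclusion, translation-invariance, and finiteness properties of the neighborhood operation in a form that can be freely invoked in later sections.
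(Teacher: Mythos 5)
Your proof is correct and follows the same route as the paper, which simply observes that all three assertions are immediate from the definition $\Omega^{+E}=\Omega E^{-1}$; you have merely written out the routine verifications. Nothing further is needed.
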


\begin{proof}
This immediately follows from the definition of $\Omega^+E$.
\end{proof}

\begin{proposition} 
\label{p:tau-coinc-sur-int}
Let $G$ be a group and let $A$ be a set.
Let $\tau \colon A^G \to A^G$ be a cellular automaton with memory set $M$. 
 Suppose that there is a subset $\Omega \subset G$ and two configurations $x,x' \in A^G$ such that $x$ and $x'$ coincide on $\Omega$.   
Then the configurations $\tau(x)$ and $\tau(x')$ coincide outside $(G \setminus \Omega)^{+M}$.
\end{proposition}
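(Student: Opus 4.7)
The plan is to unwind the definition of the $M$-neighborhood and combine it directly with the local nature of a cellular automaton. What needs to be shown is that for every $g \in G \setminus (G \setminus \Omega)^{+M}$, the equality $\tau(x)(g) = \tau(x')(g)$ holds; the complementary set is unrestricted, so there is nothing to check there.

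First I would fix such a $g$ and decode the membership condition. By definition of $(G\setminus \Omega)^{+M}$, the hypothesis $g \notin (G \setminus \Omega)^{+M}$ means that $gM \cap (G \setminus \Omega) = \varnothing$, i.e.\ $gM \subset \Omega$. Since $x$ and $x'$ agree on all of $\Omega$, they agree in particular at every point of $gM$, so $x\vert_{gM} = x'\vert_{gM}$. Translating back via the shift action, for every $m \in M$ one has $(g^{-1}x)(m) = x(gm) = x'(gm) = (g^{-1}x')(m)$, so $(g^{-1}x)\vert_M = (g^{-1}x')\vert_M$.

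Now I would apply the local defining map $\mu \colon A^M \to A$ associated with the memory set $M$ to both sides. Using the formula $\tau(y)(g) = \mu((g^{-1}y)\vert_M)$ from \eqref{e;local-property}, this immediately yields
\[
\tau(x)(g) \;=\; \mu((g^{-1}x)\vert_M) \;=\; \mu((g^{-1}x')\vert_M) \;=\; \tau(x')(g),
\]
which is exactly the required coincidence of $\tau(x)$ and $\tau(x')$ outside $(G \setminus \Omega)^{+M}$.

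There is no genuine obstacle here: the statement is essentially a bookkeeping lemma about how the memory set $M$ propagates dependencies through the shift, and the only care required is getting the direction of the inversion right (that $(g^{-1}x)\vert_M$ is determined by $x\vert_{gM}$, not by $x\vert_{g^{-1}M}$). Once this is observed, the conclusion is a one-line application of $\mu$.
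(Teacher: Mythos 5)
Your argument is correct and is exactly the paper's proof written out in full detail: the paper simply observes that by \eqref{e;local-property} the value $\tau(x)(g)$ depends only on $x\vert_{gM}$, and your unwinding of $g \notin (G\setminus\Omega)^{+M}$ into $gM \subset \Omega$ is the intended (and only) content of that observation. No differences in approach and no gaps.
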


\begin{proof}
It suffices to observe that \eqref{e;local-property} implies that $\tau(x)(g)$ depends only on the restriction of $x$ to $gM$.
\end{proof}

\subsection{Amenable groups}
\label{ss;amen}
(see for example \cite{greenleaf}, \cite{paterson})
A group $G$ is said to be \emph{amenable} if there exists a left-invariant finitely-additive probability measure defined on the set $\PP(G)$ of all subsets of $G$, that is, a map $m \colon \PP(G) \to [0,1]$ satisfying the following conditions:
\begin{enumerate}[({\rm A}-1)]
\item 
$m(A \cup B) = m(A) + m(B) - m(A \cap B)$ for all $A,B \in \PP(G)$ \ (finite additivity);
\item  
$m(G) = 1$ \ (normalization); 
\item 
$m(gA) = m(A)$ for all $g \in G$ and $A \in \PP(G)$ \ (left-invariance). 
\end{enumerate}
By a fundamental result of E. F\o lner \cite{folner}, a group $G$ is amenable if and only if it admits a net $\FF = (F_j)_{j \in J}$ consisting of nonempty finite subsets $F_j \subset G$ indexed by a directed set $J$ such that
\begin{equation}
\label{e:folner-net}
\lim_j \frac{\vert F_j^{+E} \setminus F_j \vert}{\vert F_j \vert} = 0 \quad \text{ for every finite subset } E \subset G,
\end{equation}
where we use  $\vert \cdot \vert$ to denote cardinality of finite sets.
Such a net $\FF$ is called a \emph{right F\o lner net} for $G$.
\par
All finite groups, all solvable groups, and all finitely generated groups of subexponential growth are amenable.
On the other hand, if a group $G$ contains a nonabelian free subgroup then $G$ is not amenable.

\subsection{Tilings} (see \cite[Section 2]{garden})
\label{ss;tilings}
Let $G$ be a group. Let $E$ and $F$ be subsets of $G$. A subset $T \subset G$ is called an $(E,F)$-\emph{tiling} if it satisfies the following conditions:
\begin{enumerate}[(T-1)]
\item 
the subsets $gE$, $g \in T$, are pairwise disjoint;
\item 
$G = \bigcup_{g \in T} gF$.
\end{enumerate}
\par
Note that if $T$ is an $(E,F)$-tiling then it is also an $(E',F')$-tiling for
all $E'$, $F'$ such that $E' \subset E$ and $F \subset F' \subset G$.

An easy consequence of Zorn's lemma is the following:

\begin{lemma} 
\label{l;tilings-exist}
Let $G$ be a group. Let $E$ be a nonempty subset of $G$ and let
$F = EE^{-1} = \{ab^{-1} : a,b \in E\}$. Then $G$ contains an $(E,F)$-tiling. 
\end{lemma}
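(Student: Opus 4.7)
The plan is a direct application of Zorn's lemma to the collection of subsets of $G$ satisfying (T-1), followed by the observation that the choice $F = EE^{-1}$ makes (T-2) automatic for maximal such subsets.

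First I would introduce the family
$$
\mathcal{T} = \{ T \subset G : \text{the sets } gE, \, g \in T, \text{ are pairwise disjoint}\}.
$$
This family is nonempty since $\varnothing \in \mathcal{T}$, and it is partially ordered by inclusion. Pairwise disjointness is a property about pairs, so the union of any chain in $\mathcal{T}$ still lies in $\mathcal{T}$; hence every chain has an upper bound. By Zorn's lemma, $\mathcal{T}$ admits a maximal element $T$. By construction, $T$ satisfies (T-1).

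The core algebraic observation is that for $g, h \in G$,
$$
h \in gF = g(EE^{-1}) \iff g^{-1}h = ab^{-1} \text{ for some } a,b \in E \iff hE \cap gE \neq \varnothing.
$$
To prove (T-2), I would take an arbitrary $h \in G$ and argue by maximality. If $h \in T$, then since $E \neq \varnothing$ we have $hE \cap hE \neq \varnothing$, so $h \in hF$. If $h \notin T$, suppose for contradiction that $h \notin gF$ for every $g \in T$; then by the equivalence above $hE$ is disjoint from every $gE$ with $g \in T$, so $T \cup \{h\} \in \mathcal{T}$, contradicting maximality of $T$. Hence $h \in gF$ for some $g \in T$, establishing (T-2).

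There is no real obstacle here: the argument is a routine Zorn's lemma maximality argument, and the whole substance of the lemma is packed into the elementary identity $h \in g(EE^{-1}) \iff hE \cap gE \neq \varnothing$, which is what forces the specific choice $F = EE^{-1}$ to work.
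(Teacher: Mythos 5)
Your proof is correct and follows the same route the paper indicates: the paper presents the lemma as ``an easy consequence of Zorn's lemma'' and defers to \cite[Lemma 2.2]{garden}, where the argument is precisely this maximality argument for families $T$ with pairwise disjoint translates $gE$, combined with the identity $h \in gEE^{-1} \iff hE \cap gE \neq \varnothing$. Nothing is missing.
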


\begin{proof}
See \cite[Lemma 2.2]{garden}.
\end{proof}

In amenable groups we shall use the following lower estimate for the asymptotic growth of tilings with respect to F\o lner nets:

\begin{lemma} 
\label{l;tiling}
Let $G$ be an amenable group and let $(F_j)_{j \in J}$ be a right F\o lner net for $G$.
Let $E$ and $F$ be finite subsets of $G$ and suppose that
$T \subset G$ is an $(E,F)$-tiling. 
For each $j \in J$, let  
$T_j $ be the subset of $T$ defined by $T_j = \{g \in T: gE \subset F_j\}$.
Then there exist a real number $\alpha> 0$ and an element $j_0 \in J$ such that 
$\vert T_j \vert \geq \alpha \vert F_j \vert$
for all $j \geq j_0$. 
\qed
\end{lemma}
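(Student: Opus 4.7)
My plan is to lower bound $|T_j|$ by exhibiting a sufficiently large ``deep interior'' of $F_j$ that is necessarily covered by tiles $gF$ with $g \in T_j$, and then showing that this interior occupies most of $F_j$ by the F\o lner condition.

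Concretely, I would introduce the finite set $F'' := F^{-1}E \cup \{1_G\}$ and define
\[
A_j := \{ h \in G : hF'' \subset F_j\}.
\]
Since $1_G \in F''$, one has $A_j \subset F_j$. The first step is a tile-inscription claim: $A_j \subset \bigcup_{g \in T_j} gF$. Indeed, by the covering property (T-2) of the tiling $T$, any $h \in A_j$ can be written as $h = gf$ with $g \in T$ and $f \in F$; then $g = hf^{-1}$, so
\[
gE = hf^{-1}E \subset h F^{-1}E \subset h F'' \subset F_j,
\]
which means $g \in T_j$. This immediately gives $|A_j| \leq |F| \cdot |T_j|$.

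The second step is to show $|A_j|/|F_j| \to 1$. I would write
\[
F_j \setminus A_j = \bigcup_{f \in F''} \{h \in F_j : hf \notin F_j\}.
\]
For each fixed $f \in F''$, the map $h \mapsto hf$ injects the set $\{h \in F_j : hf \notin F_j\}$ into $F_j F'' \setminus F_j = F_j^{+F''^{-1}} \setminus F_j$, so
\[
|F_j \setminus A_j| \leq |F''| \cdot |F_j^{+F''^{-1}} \setminus F_j|.
\]
The F\o lner condition \eqref{e:folner-net} applied with $E := F''^{-1}$ makes the right-hand side an $o(|F_j|)$.

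Combining the two steps, for all $j$ beyond some $j_0$ one has $|A_j| \geq \tfrac{1}{2}|F_j|$, whence $|T_j| \geq |A_j|/|F| \geq \alpha |F_j|$ with $\alpha := 1/(2|F|)$. The only real choice in the proof is picking $F''$: it must be large enough that $hF'' \subset F_j$ forces every tile through $h$ to be fully inscribed in $F_j$ (step~1), yet it is a fixed finite set independent of $j$ so that step~2 is a direct application of the F\o lner property. Once $F'' = F^{-1}E \cup \{1_G\}$ is in hand, both steps are routine.
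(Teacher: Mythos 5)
Your proof is correct: the tile-inscription step and the injectivity of right translation are both sound, and the choice $F''=F^{-1}E\cup\{1_G\}$ does exactly what is needed. This is essentially the same argument as the one the paper cites (covering a ``F\o lner interior'' $A_j$ of $F_j$ by tiles $gF$ with $g\in T_j$ and then using \eqref{e:folner-net} to show $|A_j|/|F_j|\to 1$), so there is nothing further to add.
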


\begin{proof}
See \cite[Lemma 4.3]{garden}.
\end{proof}

 \subsection{Mean dimension}
Let $G$ be an amenable group, $\FF = (F_j)_{j \in J}$ a right F\o lner net for $G$, and $V$ a 
finite-dimensional vector space over some field $\K$.
Given a subset $\Omega \subset G$, we shall denote by $\pi_\Omega \colon V^G \to V^\Omega$ the projection  map.
Observe that $\pi_\Omega$ is $\K$-linear for every $\Omega \subset G$.
Observe also that the vector space 
$V^\Omega$ is finite-dimensional  if $\Omega$ is a finite subset of $G$.
\par
Let $X$ be a vector subspace of $V^G$. The \emph{mean dimension}  
$\mdim_\FF(X)$ of $X$ with respect to the right F\o lner net $\FF$ is defined by
\begin{equation}  \label{e;mdim}
\mdim_\FF(X) = \limsup_{j} \frac{\dim(\pi_{F_j}(X))}{\vert F_j \vert},
\end{equation}
where we use  $\dim(\cdot)$ to denote dimension of finite-dimensional $\K$-vector spaces.
\par
It immediately follows from this definition that 
$\mdim_\FF(V^G) = \dim(V)$ and that $\mdim_\FF(X) \leq \mdim_\FF(Y)$ whenever $X$ and $Y$ are vector subspaces of $V^G$ such that $X \subset Y$. In particular, we have $0 \leq \mdim_\FF(X) \leq \dim(V)$ for
every vector subspace $X \subset V^G$.

An important property of linear cellular automata is the fact that 
 they cannot increase mean dimension of vector subspaces:

\begin{proposition}
\label{p;tau}
Let $G$ be an amenable group, $\FF = (F_j)_{j \in J}$ a right F\o lner net for $G$, and
$V$  a finite dimensional vector space over a field $\K$. 
Let $\tau \colon V^G \to V^G$ be a linear cellular automaton and
let $X \subset V^G$ be a vector subspace.  Then one has
$\mdim_\FF(\tau(X)) \leq \mdim_\FF(X)$.
\end{proposition}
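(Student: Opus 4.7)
The plan is to exploit the memory set to bound how $\tau$ propagates dependencies, then use the Følner condition to absorb the resulting error. Let $M$ be a memory set for $\tau$; by enlarging $M$ if necessary I may assume $1_G \in M$ (this does not change $\tau$, only forces $F_j \subset F_jM$). The governing observation is that $\tau(x)(g)$ depends only on $x|_{gM}$, so $\tau(x)|_{F_j}$ is determined by $x|_{F_jM}$. Concretely, $\pi_{F_j} \circ \tau \colon X \to V^{F_j}$ factors as a linear map $\widetilde{\tau}_j \colon \pi_{F_jM}(X) \to V^{F_j}$ with image exactly $\pi_{F_j}(\tau(X))$, so $\widetilde{\tau}_j$ is a linear surjection onto $\pi_{F_j}(\tau(X))$. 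This immediately gives the bound
\[
\dim \pi_{F_j}(\tau(X)) \leq \dim \pi_{F_jM}(X).
\]

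Next I would relate $\dim \pi_{F_jM}(X)$ to $\dim \pi_{F_j}(X)$ via the obvious restriction map. Since $F_j \subset F_jM$, the projection $\pi_{F_j}^{F_jM} \colon \pi_{F_jM}(X) \to \pi_{F_j}(X)$ is a linear surjection whose kernel embeds into $V^{F_jM \setminus F_j}$, a space of dimension $|F_jM \setminus F_j|\cdot \dim(V)$. The rank--nullity theorem then yields
\[
\dim \pi_{F_jM}(X) \leq \dim \pi_{F_j}(X) + |F_jM \setminus F_j| \cdot \dim(V).
\]
Combining the two inequalities and dividing by $|F_j|$ gives
\[
\frac{\dim \pi_{F_j}(\tau(X))}{|F_j|} \leq \frac{\dim \pi_{F_j}(X)}{|F_j|} + \frac{|F_jM \setminus F_j|}{|F_j|}\cdot \dim(V).
\]

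Finally, I would take $\limsup_j$ on both sides and invoke the Følner condition. Since $F_jM = F_j^{+M^{-1}}$ and $M^{-1}$ is a finite subset of $G$, the defining property \eqref{e:folner-net} of the right Følner net $\FF$ gives $|F_jM \setminus F_j|/|F_j| \to 0$, so the error term vanishes in the limit and we obtain $\mdim_\FF(\tau(X)) \leq \mdim_\FF(X)$, as desired.

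There is no really hard step here; the only mild subtlety is the bookkeeping with the memory set: one must be careful that $F_j M$, rather than $F_j M^{-1}$, is the relevant neighborhood (because $\tau(x)(g)$ reads $x$ on $gM$), and then verify that the Følner hypothesis — stated for $E$-neighborhoods $F_j^{+E} = F_j E^{-1}$ — still applies, which it does upon taking $E = M^{-1}$. The rest is routine linear algebra.
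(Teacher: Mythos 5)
Your proof is correct and is essentially the standard argument for this proposition (the paper itself only cites \cite[Proposition 4.7]{garden} for it): factor $\pi_{F_j}\circ\tau$ through $\pi_{F_jM}(X)$ using the memory set, bound $\dim\pi_{F_jM}(X)$ by $\dim\pi_{F_j}(X)$ plus a boundary term, and let the F\o lner condition (applied with $E=M^{-1}$, so that $F_j^{+E}=F_jM$) kill the boundary term in the limit. Your remark about the $F_jM$ versus $F_jM^{-1}$ bookkeeping is exactly the one point that needs care, and you handle it correctly.
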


\begin{proof}
See \cite[Proposition 4.7]{garden}.
\end{proof}

 \begin{remark*}
It may be shown that if $G$ is an amenable group, $\FF$ a right F\o lner net, $V$ a finite-dimensional vector space, and 
$X \subset V^G$ a linear subshift, then the $\limsup$ in the definition of $\mdim_\FF(X)$ is in fact a true limit and that $\mdim_\FF(X)$ is independent of the choice of the right F\o lner net $\FF$ for $G$.
These two important facts  can be deduced from the theory of quasi-tiles in amenable groups developed by  
D. Ornstein and B. Weiss in \cite{ornstein-weiss} (see \cite{gromov-tids}, \cite{elek-rank}, \cite{krieger-lemme}). However, we do not need them in the present paper.
\end{remark*}

\subsection{Reversible linear cellular automata}

Let $G$ be a group and let $A$ be a set. 
A  cellular automaton $\tau \colon X \to Y$ between
subshifts $X, Y \subset A^G$  is said to be  \emph{reversible} if $\tau$ is bijective and the inverse map 
$\tau^{-1} \colon Y \to X$ is also a cellular automaton.
\par
It is well known that every bijective linear cellular automaton
$\tau \colon X \to Y$ between subshifts $X,Y \subset A^G$ is reversible
when the alphabet $A$ is finite
(this may be easily deduced from the compactness of $A^G$ and the Curtis-Hedlund theorem 
\cite{hedlund-endomorphisms} which says that, when the alphabet $A$ is finite,  a map between subshifts of $A^G$ is a cellular automaton if and only if it is continuous and $G$-equivariant).
On the other hand, if $G$ contains an element of infinite order and $A$ is infinite then one can construct a bijective  cellular automaton
$\tau \colon A^G \to A^G$ which is not reversible (see \cite[Corollary 1.2]{periodic}).
Similarly, if $G$ contains an element of infinite order and $V$ is an infinite-dimensional vector space then one can construct a bijective linear cellular automaton
$\tau \colon V^G \to V^G$ which is not reversible (see \cite[Theorem 1.1]{periodic}).
\par
 The following result is proved in \cite{israel}:

\begin{theorem}
\label{t;reversible-countable}
Let $G$ be a countable group, $V$ a finite-dimensional
vector space over a field $\K$, and  $X, Y \subset V^G$ two linear subshifts.
Then every  bijective linear cellular automaton $\tau \colon X \to Y$ is reversible.
\end{theorem}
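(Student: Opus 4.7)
The strategy is to construct a local defining map for $\tau^{-1}$ directly. By $G$-equivariance of $\tau^{-1}$ (automatic since $\tau$ is $G$-equivariant and bijective), it suffices to find a finite subset $M' \subset G$ and a $\K$-linear map $\mu' \colon V^{M'} \to V$ such that $\tau^{-1}(y)(1_G) = \mu'(y|_{M'})$ for every $y \in Y$. Setting $x = \tau^{-1}(y)$, what I really need is a finite $M' \subset G$ with the following property: for every $x \in X$, the equality $\tau(x)|_{M'} = 0$ forces $x(1_G) = 0$. Once this is known, the $\K$-linear map $\pi_{M'}(Y) \to V$ sending $\pi_{M'}(y)$ to $\tau^{-1}(y)(1_G)$ is well-defined, and extends by linearity to $\mu' \colon V^{M'} \to V$; a short check using $G$-equivariance shows this $\mu'$ serves as a local defining map for $\tau^{-1}$.

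I argue the existence of such an $M'$ by contradiction. Using the countability of $G$, fix an increasing exhaustion $F_1 \subset F_2 \subset \cdots$ with $\bigcup_n F_n = G$ and $M \cup \{1_G\} \subset F_1$, where $M$ is a memory set of $\tau$. The subspaces $L_n = \{x \in X : \tau(x)|_{F_n} = 0\}$ form a descending chain whose intersection is $\Ker \tau = \{0\}$. The images $\pi_{\{1_G\}}(L_n)$ form a descending chain in the finite-dimensional space $V$, hence stabilize at some subspace $V' \subset V$; the contradiction hypothesis forces $V' \ne 0$. Fix $v \in V' \setminus \{0\}$ and set $A_n = \{x \in L_n : x(1_G) = v\}$, a decreasing sequence of nonempty affine subspaces of $X$.

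The main obstacle is to extract a point from $\bigcap_n A_n$, since such a point would lie in $\Ker \tau$ while having nonzero value at $1_G$, violating injectivity. The chain $(A_n)$ itself need not stabilize inside the infinite-dimensional space $X$, but inside each finite-dimensional space $V^{F_m}$ the descending chain $\pi_{F_m}(A_n)_{n \ge m}$ of nonempty affine subspaces does stabilize, say at some $B_m$. Choosing the stabilization indices $n_m^*$ monotonically in $m$ makes the restriction maps $B_{m'} \to B_m$ surjective for all $m' \ge m$, so the Mittag-Leffler lemma for countable inverse systems of nonempty sets with surjective bonding maps yields a compatible family $(y_m) \in \varprojlim_m B_m$, and hence an element $y \in V^G$ with $y|_{F_m} = y_m$ for every $m$. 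Since each $y_m \in \pi_{F_m}(X)$ and $X$ is closed in $V^G$, we have $y \in X$; moreover $y(1_G) = v$. To see $\tau(y) = 0$, fix $g \in G$, choose $m$ with $gM \subset F_m$, and pick any $z \in A_{n_m^*}$ with $z|_{F_m} = y|_{F_m}$: the memory set property of $\tau$ gives $\tau(y)(g) = \tau(z)(g)$, and since $1_G \in M$ forces $g \in F_m \subset F_{n_m^*}$, this common value is $0$. Hence $\tau(y) = 0$, so $y = 0$ by injectivity, contradicting $y(1_G) = v \ne 0$.
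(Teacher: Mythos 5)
Your proof is correct and is essentially the argument behind the cited \cite[Theorem 3.1]{israel} (the paper itself gives no proof of Theorem \ref{t;reversible-countable}, only that citation): the reduction to showing that $y\vert_{M'}$ determines $\tau^{-1}(y)(1_G)$ for some finite $M'$, followed by stabilization of descending chains of nonempty finite-dimensional affine subspaces and the Mittag-Leffler lemma to manufacture a nonzero element of $\Ker \tau$, is exactly the machinery this paper deploys in the proof of Theorem \ref{t;closure-lin}. One cosmetic point: in the final step you invoke $1_G \in M$ to conclude $g \in F_m$, although you only arranged $M \cup \{1_G\} \subset F_1$; either assume $1_G \in M$ from the outset (harmless, since any finite superset of a memory set is a memory set) or simply choose $m$ with $gM \cup \{g\} \subset F_m$.
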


\begin{proof}
See \cite[Theorem 3.1]{israel}.
\end{proof}

We will use the fact that mean dimension of linear subshifts is preserved by reversible linear cellular automata:

 \begin{proposition}
\label{p:inv-mdim-reversible}
Let $G$ be an amenable group, $\FF = (F_j)_{j \in J}$ a right F\o lner net for $G$, and
$V$  a finite dimensional vector space over a field $\K$. 
Let $X, Y \subset V^G$ be two linear subshifts. Suppose that there exists a reversible linear cellular automaton $\tau \colon X \to Y$.
Then one has $\mdim_\FF(X) = \mdim_\FF(Y)$.
\end{proposition}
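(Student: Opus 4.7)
The plan is to apply Proposition \ref{p;tau} twice, once to $\tau$ and once to $\tau^{-1}$. Since $\tau$ is reversible, $\tau^{-1}\colon Y\to X$ is itself a linear cellular automaton, and both $\tau(X)=Y$ and $\tau^{-1}(Y)=X$. If we can invoke Proposition \ref{p;tau} in each direction we will obtain $\mdim_\FF(Y)\leq \mdim_\FF(X)$ and $\mdim_\FF(X)\leq \mdim_\FF(Y)$, and the conclusion follows immediately.

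The one technical point is that Proposition \ref{p;tau} is formulated for linear cellular automata of the form $V^G\to V^G$, while we have linear cellular automata between (possibly proper) linear subshifts. So the first step is to extend any linear cellular automaton $\sigma\colon X'\to Y'$ between linear subshifts $X',Y'\subset V^G$ to a linear cellular automaton $\tilde\sigma\colon V^G\to V^G$ with $\tilde\sigma|_{X'}=\sigma$. Given a memory set $M$ for $\sigma$ and the associated local defining map, linearity of $\sigma$ says that the restriction of $\sigma$ is governed by a $\K$-linear map $\mu_0\colon \pi_M(X')\to V$, where $\pi_M(X')$ is a vector subspace of the finite-dimensional $\K$-vector space $V^M$. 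Picking any linear complement of $\pi_M(X')$ in $V^M$ produces a $\K$-linear extension $\mu\colon V^M\to V$ of $\mu_0$, and the formula $\tilde\sigma(x)(g)=\mu((g^{-1}x)|_M)$ then defines a $G$-equivariant $\K$-linear map $\tilde\sigma\colon V^G\to V^G$ with memory set $M$, i.e.\ a linear cellular automaton on $V^G$, which agrees with $\sigma$ on $X'$ (and in particular satisfies $\tilde\sigma(X')=\sigma(X')$).

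Applying this extension procedure to $\tau$ yields $\tilde\tau\colon V^G\to V^G$ with $\tilde\tau(X)=Y$, and applying it to $\tau^{-1}$ yields $\widetilde{\tau^{-1}}\colon V^G\to V^G$ with $\widetilde{\tau^{-1}}(Y)=X$. Proposition \ref{p;tau} then gives
\[
\mdim_\FF(Y)=\mdim_\FF(\tilde\tau(X))\leq \mdim_\FF(X)
\]
and
\[
\mdim_\FF(X)=\mdim_\FF(\widetilde{\tau^{-1}}(Y))\leq \mdim_\FF(Y),
\]
so that $\mdim_\FF(X)=\mdim_\FF(Y)$, as desired.

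I do not anticipate any genuine obstacle: the extension step is a routine linear-algebra construction (just extending a linear map from a subspace of a finite-dimensional vector space to the whole space), and Proposition \ref{p;tau} then delivers both inequalities without further work.
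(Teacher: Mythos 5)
Your proof is correct and takes essentially the same route as the paper: apply Proposition \ref{p;tau} to $\tau$ and to $\tau^{-1}$ (the latter being a linear cellular automaton by reversibility) to obtain the two inequalities $\mdim_\FF(Y)\leq\mdim_\FF(X)$ and $\mdim_\FF(X)\leq\mdim_\FF(Y)$. The only difference is that you make explicit the extension of a linear cellular automaton on a subshift to one on all of $V^G$ so that Proposition \ref{p;tau} applies verbatim; the paper leaves this point implicit, and your extension argument (linearly extending the local defining map from $\pi_M(X')$ to $V^M$) is sound.
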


\begin{proof}
As $\tau \colon X \to Y$ is a surjective linear cellular automaton, we have
$\mdim_\FF(Y)  \leq \mdim_\FF(X)$ by Proposition \ref{p;tau}.
Similarly, we have $\mdim_\FF(X) \leq \mdim_\FF(Y)$ since $\tau^{-1} \colon Y \to X$ is a surjective linear cellular automaton.
Thus we have $\mdim_\FF(X) = \mdim_\FF(Y)$. 
\end{proof}

By combining Theorem \ref{t;reversible-countable} and Proposition \ref{p:inv-mdim-reversible}, we get:

 \begin{corollary}
\label{c:inv-mdim-reversible-countable}
Let $G$ be a countable  amenable group, $\FF = (F_j)_{j \in J}$ a right F\o lner net for $G$, and
$V$  a finite dimensional vector space over a field $\K$. 
Let $X, Y \subset V^G$ be two linear subshifts. 
Suppose that there exists a bijective linear cellular automaton $\tau \colon X \to Y$.
Then one has $\mdim_\FF(X) = \mdim_\FF(Y)$.
\qed
\end{corollary}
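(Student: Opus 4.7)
The plan is essentially to chain the two results that have just been stated: Theorem \ref{t;reversible-countable} converts the bijectivity hypothesis into reversibility, after which Proposition \ref{p:inv-mdim-reversible} directly yields the desired equality of mean dimensions.

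More concretely, first I would invoke Theorem \ref{t;reversible-countable} with the countable group $G$, the finite-dimensional vector space $V$, and the given linear subshifts $X, Y \subset V^G$. Since $\tau \colon X \to Y$ is a bijective linear cellular automaton and $G$ is countable, that theorem tells us that the inverse map $\tau^{-1} \colon Y \to X$ is also a linear cellular automaton; in other words, $\tau$ is reversible.

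Second, with reversibility of $\tau$ now in hand, I would apply Proposition \ref{p:inv-mdim-reversible} to $\tau$ (noting that the amenability of $G$ and the existence of the right F\o lner net $\FF$ are part of the hypotheses here). The proposition then gives $\mdim_\FF(X) = \mdim_\FF(Y)$, which is exactly what is to be proved.

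There is no real obstacle to overcome: the content of the corollary is entirely packaged in the two invoked results, and the role of the countability hypothesis is simply to guarantee that bijectivity upgrades to reversibility via Theorem \ref{t;reversible-countable}. The only thing worth noting is that without countability one cannot directly apply this chain of reasoning, which is consistent with the construction in \cite{periodic} of bijective linear cellular automata over uncountable situations that fail to be reversible; so the countability assumption is used in an essential way in step one, but no further work is needed beyond assembling the two quoted results.
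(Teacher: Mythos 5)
Your proof is correct and is exactly the paper's argument: the corollary is obtained by combining Theorem \ref{t;reversible-countable} (countability upgrades bijectivity to reversibility) with Proposition \ref{p:inv-mdim-reversible} (reversible linear cellular automata preserve mean dimension). No further comment is needed.
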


\section{Mean dimension of strongly irreducible linear subshifts}
\label{sec:mean-dim-strongly-irr}

This section contains  results on mean dimension of strongly irreducible linear subshifts. We start with a slightly technical lemma which will also be used in the next section:

 \begin{lemma} 
\label{l:mdimY<mdimX-strongly}
Let $G$ be an amenable group, $\FF = (F_j)_{j \in J}$ a right F\o lner net for $G$, and $V$ a 
finite-dimensional vector space over a field $\K$.
Let $X \subset V^G$ be  a strongly irreducible linear subshift
and let $\Delta$ be a finite subset of $G$ such that $1_G \in \Delta$ and $X$ is $\Delta$-irreducible.
Let $D$, $E$ and $F$ be finite subsets of $G$ with $D^{+ \Delta} \subset E$.
Suppose that $T \subset G$ is an $(E,F)$-tiling and 
 that $Z$ is a vector subspace of $X$ such that
 \begin{equation}
\label{e:pi-gD-Y-strict-in-pi-gD-X-lemma}
\pi_{gD}(Z) \subsetneqq \pi_{gD}(X)
\end{equation}
 for all $ g \in T$. 
Then one has $\mdim_\FF(Z) < \mdim_\FF(X)$.
\end{lemma}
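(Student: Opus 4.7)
The plan is to show, for all sufficiently large $j$, a dimension gap of at least $\alpha |F_j|$ between $\pi_{F_j}(X)$ and $\pi_{F_j}(Z)$, then divide by $|F_j|$ and take the $\limsup$.

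I would start by introducing the truncated tiling $T_j := \{g \in T : gE \subset F_j\}$ and invoking Lemma \ref{l;tiling} to obtain $\alpha > 0$ and $j_0 \in J$ with $|T_j| \geq \alpha |F_j|$ for all $j \geq j_0$. Since $1_G \in \Delta$, Proposition \ref{p:prop-int-ferm-bord} gives $D \subset D^{+\Delta} \subset E$, and because the $gE$ for $g \in T$ are pairwise disjoint, the translates $gD$, $g \in T_j$, are pairwise disjoint subsets of $F_j$. This yields a well-defined surjective $\K$-linear ``product projection''
$$
\rho_j \colon V^{F_j} \to \prod_{g \in T_j} V^{gD}, \qquad v \mapsto (v|_{gD})_{g \in T_j},
$$
satisfying $\rho_j \circ \pi_{F_j}(x) = (\pi_{gD}(x))_{g \in T_j}$ for every $x \in V^G$.

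The heart of the argument, and the main obstacle, is to establish the product identity
$$
\rho_j(\pi_{F_j}(X)) = \prod_{g \in T_j} \pi_{gD}(X),
$$
i.e., that any prescribed local pieces $\pi_{gD}(x_g)$, one for each $g \in T_j$, can be simultaneously realized by a single element of $X$. I would prove this by induction on an enumeration $T_j = \{g_1, \dots, g_N\}$: starting from $x^{(0)} \in X$ arbitrary, I successively produce $x^{(k+1)} \in X$ from $x^{(k)}$ by applying the $\Delta$-irreducibility of $X$ to $\Omega_1 := \bigcup_{i \leq k} g_i D$ and $\Omega_2 := g_{k+1} D$. The required disjointness $\Omega_1^{+\Delta} \cap \Omega_2 = \varnothing$ follows from $\Omega_1^{+\Delta} = \bigcup_{i \leq k} g_i D^{+\Delta} \subset \bigcup_{i \leq k} g_i E$, $g_{k+1} D \subset g_{k+1} E$, and the tiling property which makes these translates disjoint.

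Granting the product identity, the strict inclusion hypothesis gives $\dim(\pi_{gD}(X)) - \dim(\pi_{gD}(Z)) \geq 1$ for every $g \in T_j$, hence
$$
\dim\bigl(\rho_j(\pi_{F_j}(X))\bigr) - \dim\bigl(\rho_j(\pi_{F_j}(Z))\bigr) \geq |T_j|.
$$
A rank–nullity computation, using that $\pi_{F_j}(Z) \subset \pi_{F_j}(X)$ forces the respective kernels of $\rho_j$ to be nested, transfers this gap to
$$
\dim(\pi_{F_j}(X)) - \dim(\pi_{F_j}(Z)) \geq |T_j| \geq \alpha |F_j|
$$
for all $j \geq j_0$. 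Dividing by $|F_j|$ yields $\dim(\pi_{F_j}(Z))/|F_j| \leq \dim(\pi_{F_j}(X))/|F_j| - \alpha$, and passing to the $\limsup$ gives $\mdim_\FF(Z) \leq \mdim_\FF(X) - \alpha < \mdim_\FF(X)$, as required.
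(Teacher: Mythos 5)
Your argument is correct, and its skeleton coincides with the paper's: truncate the tiling to $T_j$, invoke Lemma \ref{l;tiling} for the bound $|T_j|\ge\alpha|F_j|$, run an induction over an enumeration of $T_j$ in which $\Delta$-irreducibility is applied with $\Omega_1^{+\Delta}$ trapped inside a union of tiles $g_iE$ disjoint from the next tile, extract a dimension gap of at least $|T_j|$ between $\pi_{F_j}(X)$ and $\pi_{F_j}(Z)$, and divide by $|F_j|$. The difference is in the linear-algebra packaging. You prove the stronger intermediate statement that the local projections over the disjoint tiles are \emph{independent}, i.e.\ $\rho_j(\pi_{F_j}(X))=\prod_{g\in T_j}\pi_{gD}(X)$, which requires simultaneously realizing arbitrary prescribed pieces, and then transfer the codimension count through $\rho_j$ by comparing kernels. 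The paper instead works with the descending chain of subspaces of $\pi_{F_j}(X)$ cut out by the constraints $\pi_{gD}^{F_j}(q)\in\pi_{gD}(Z)$ and only needs, at each step $i$, a single witness: an element of $X$ equal to a ``bad'' piece $p\notin\pi_{g_iD}(Z)$ on $g_iD$ and identically zero on $F_j\setminus g_iE$ (so that its restrictions to the earlier tiles lie in $\pi_{g_kD}(Z)$ because $0\in Z$). Your route buys a cleaner structural statement (exact surjectivity onto the product, usable elsewhere) at the cost of a slightly heavier realization argument and the extra rank--nullity step; the paper's route is more economical, exploiting that $Z$ is a subspace so the zero configuration serves as the background on the other tiles. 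Both are complete proofs.
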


\begin{proof}
As in Lemma \ref{l;tiling},
let us define, for each $j \in J$, the subset 
$T_j \subset T$ by $T_j = \{g \in T : gE \subset F_j \}$.
Observe that, for all $j \in J$ and $g \in T_j$, we have the inclusions $gD \subset gD^{+\Delta} \subset gE \subset F_j$.
Denote, for $j \in J$ and $g \in T_j$, by $\pi_{gD}^{F_j} \colon V^{F_j} \to V^{gD}$ the natural projection map.
Consider,  for each $j \in J$, the vector subspace
  $\pi_{F_j}^*(X) \subset \pi_{F_j}(X)$ defined by
$$
\pi_{F_j}^*(X) = \{q \in \pi_{F_j}(X): \pi_{gD}^{F_j}(q) \in \pi_{gD}(Z) \mbox{ for all } g \in T_j\}.  
$$
We claim that
\begin{equation}
\label{xi-T-j}
\dim (\pi_{F_j}^*(X))  \leq \dim (\pi_{F_j}(X)) - |T_j|
\end{equation}
for all $j \in J$.
\par
To prove our claim, let us fix an element $j \in J$ and suppose that  $T_j = \{g_1, g_2, \ldots, g_m\}$, where $m = \vert T_j \vert$. 
Consider, for each $i \in \{0,1,\ldots,m\}$, 
the vector subspace $\pi_{F_j}^{(i)}(X) \subset \pi_{F_j}(X)$
defined by
$$
\pi_{F_j}^{(i)}(X) = \{q \in \pi_{F_j}(X): \pi_{g_kD}^{F_j}(q) \in \pi_{g_kD}(Z)  \mbox{ for all } 1 \leq k \leq i\}.
$$ 
  Note that
  $$
\pi_{F_j}^{(i)}(X) \subset \pi_{F_j}^{(i - 1)}(X)
  $$
 for all $i=1,2,\ldots, m$.
Let us show  that
\begin{equation}
\label{xi-T-j-i}
\dim (\pi_{F_j}^{(i)}(X))  \leq \dim (\pi_{F_j}(X)) - i
\end{equation}
for all $i \in \{0,1,\ldots, m\}$.
Since $\pi_{F_j}^{(m)}(X) = \pi_{F_j}^*(X)$, this will prove \eqref{xi-T-j}.
\par
To establish \eqref{xi-T-j-i}, we use induction on $i$.
For $i = 0$, we have $\pi_{F_j}^{(i)}(X) = \pi_{F_j}(X)$ so that there is nothing to prove.
 Suppose now that $\dim (\pi_{F_j}^{(i-1)}(X))  \leq \dim (\pi_{F_j}(X)) - (i-1)$ for some $i \leq m - 1$.
By hypothesis \eqref{e:pi-gD-Y-strict-in-pi-gD-X-lemma}, we can find an element 
 $p \in \pi_{g_{i}D}(X) \setminus \pi_{g_{i}D}(Z)$.
As $(g_{i}D)^{+\Delta} = g_{i}D^{+\Delta} \subset g_i E$ and $X$ is $\Delta$-irreducible,
there exists an element $x \in X$ such that
$\pi_{g_iD}(x) = p$ and $x$ is identically zero on $F_j \setminus g_iE$.
Now observe that $\pi_{F_j}(x) \in \pi_{F_j}^{(i-1)}(X)$
since the sets $g_1D, g_2D,\ldots, g_{i - 1}D$ are all contained in $F_j \setminus g_i E$.
On the other hand, we have $\pi_{F_j}(x) \notin \pi_{F_j}^{(i)}(X)$ as $\pi_{g_iD}(x) = p \notin \pi_{g_iD}(Z)$.
This shows that $\pi_{F_j}^{(i)}(X)$ is strictly contained in $\pi_{F_j}^{(i-1)}(X)$.  
Hence we have $\dim (\pi_{F_j}^{(i)}(X)) \leq \dim (\pi_{F_j}^{(i-1)}(X)) -1   \leq (\dim (\pi_{F_j}(X)) - (i-1)) - 1 = \dim (\pi_{F_j}(X)) - i$, by using our induction hypothesis. This establishes \eqref{xi-T-j-i} and therefore \eqref{xi-T-j}. 
\par
By Lemma \ref{l;tiling}, we can find a real number $\alpha > 0$ and an element $j_0 \in J$ such that
$\vert T_j \vert \geq \alpha \vert F_j\vert$ for all $j \geq j_0$. 
Since $\pi_{F_j}(Z) \subset \pi_{F_j}^*(X)$, we deduce from \eqref{xi-T-j} that 
$\dim (\pi_{F_j}(Z)) \leq \dim (\pi_{F_j}(X)) - \alpha \vert F_j \vert$ for all $j \geq j_0$, so that
\[
\begin{split}
\mdim_\FF(Z) & = \limsup_{j}\frac{\dim (\pi_{F_j}(Z))}{\vert F_j \vert}\\ 
& \leq \limsup_{j}\frac{\dim (\pi_{F_j}(X))}{\vert F_j \vert} - \alpha \\ 
& = \mdim_\FF(X) - \alpha \\ 
& < \mdim_\FF(X). \qed
\end{split}
\]
\renewcommand{\qedsymbol}{}
\end{proof}

 \begin{proposition} 
\label{p;mdimY<mdimX}
Let $G$ be an amenable group, $\FF = (F_j)_{j \in J}$ a right F\o lner net for $G$, and $V$ a 
finite-dimensional vector space over a field $\K$. Let $X \subset V^G$ be  a strongly irreducible linear subshift
 and $Y \subset V^G$  a linear subshift such that $Y \subsetneqq X$. 
Then one has $\mdim_\FF(Y) < \mdim_\FF(X)$.
\end{proposition}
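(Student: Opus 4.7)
The plan is to reduce Proposition \ref{p;mdimY<mdimX} to Lemma \ref{l:mdimY<mdimX-strongly} applied with $Z = Y$. To do this I need to locate a finite subset $D \subset G$ for which the projections of $Y$ and $X$ already differ, set up the data $(E,F,T,\Delta)$ around it, and then verify that the strict inclusion $\pi_{gD}(Y) \subsetneqq \pi_{gD}(X)$ holds at every tile position $g \in T$.

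First I would observe that, as a subshift, $Y$ is closed in $V^G$ for the prodiscrete topology. Since $Y \subsetneqq X$, the closedness of $Y$ together with the fact that basic open neighborhoods in $V^G$ are cylinders over finite subsets of $G$ forces the existence of at least one finite subset $D \subset G$ with $\pi_D(Y) \subsetneqq \pi_D(X)$; otherwise $Y$ would be dense in $X$, hence equal to $X$. Note that $D$ is necessarily nonempty. Next, using the $G$-invariance of both $X$ and $Y$, the shift identification $V^{gD} \cong V^D$ carries $\pi_{gD}(X)$ to $\pi_D(X)$ and $\pi_{gD}(Y)$ to $\pi_D(Y)$, so the strict inclusion propagates: $\pi_{gD}(Y) \subsetneqq \pi_{gD}(X)$ for every $g \in G$.

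Now I would set up the tiling data. Let $\Delta \subset G$ be a finite set such that $X$ is $\Delta$-irreducible; after enlarging $\Delta$ to $\Delta \cup \{1_G\}$, which keeps $\Delta$-irreducibility (any larger $\Delta$ still works), I may assume $1_G \in \Delta$. Set $E := D^{+\Delta}$, which is finite by Proposition \ref{p:prop-int-ferm-bord}(iii) and nonempty since $1_G \in \Delta$ gives $D \subset D^{+\Delta} = E$. Invoking Lemma \ref{l;tilings-exist}, put $F := EE^{-1}$ and obtain an $(E,F)$-tiling $T \subset G$. The hypothesis $D^{+\Delta} \subset E$ of Lemma \ref{l:mdimY<mdimX-strongly} is satisfied by construction, and the strict inclusion $\pi_{gD}(Y) \subsetneqq \pi_{gD}(X)$ holds for every $g \in T$ (indeed for every $g \in G$) by the previous step.

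Applying Lemma \ref{l:mdimY<mdimX-strongly} with $Z = Y$ then yields $\mdim_\FF(Y) < \mdim_\FF(X)$, as required. There is no serious obstacle: the essential content (a quantitative dimension drop produced from one strict inclusion of local projections) is already packaged in Lemma \ref{l:mdimY<mdimX-strongly}. The only delicate point is the opening move, namely extracting a single finite $D$ witnessing the strictness, which relies crucially on $Y$ being closed in the prodiscrete topology, and on $G$-invariance to promote one such $D$ into a strict inclusion at every translate $gD$.
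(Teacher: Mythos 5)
Your proof is correct and follows essentially the same route as the paper's: extract a finite $D$ with $\pi_D(Y)\subsetneqq\pi_D(X)$ from the closedness of $Y$, propagate the strict inclusion to all translates $gD$ by $G$-invariance, set $E=D^{+\Delta}$, produce an $(E,F)$-tiling via Lemma \ref{l;tilings-exist}, and apply Lemma \ref{l:mdimY<mdimX-strongly} with $Z=Y$. The extra details you supply (density argument for the existence of $D$, stability of $\Delta$-irreducibility under enlarging $\Delta$) are accurate and merely make explicit what the paper leaves implicit.
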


\begin{proof}
As $Y \subsetneqq X$ and $Y$ is closed in $V^G$ for the prodiscrete topology,  we can find a finite
subset $D \subset G$ such that $\pi_D(Y) \subsetneqq \pi_D(X)$.
By  the $G$-invariance of $X$ and $Y$,  this implies
 \begin{equation}
\label{e:pi-gD-Y-strict-in-pi-gD-X}
\pi_{gD}(Y) \subsetneqq \pi_{gD}(X)
\end{equation}
for all $g \in G$.
\par
Let $\Delta$ be a finite subset of $G$ such that $1_G \in \Delta$ and $X$ is $\Delta$-irreducible, and 
take  $E = D^{+\Delta}$. By virtue of Lemma \ref{l;tilings-exist}, we can find a finite subset $F \subset G$ and an $(E,F)$-tiling  $T \subset G$.
Then, by taking $Z = Y$,  all the hypotheses in Lemma \ref{l:mdimY<mdimX-strongly} are satisfied so that 
we get $\mdim_\FF(Y) < \mdim_\FF(X)$.
\end{proof}
 
\begin{corollary}
\label{cor:positive-mdim-if-strongly}
Let $G$ be an amenable group,  $\FF = (F_j)_{j \in J}$ a right F\o lner net for $G$, 
and $V$  a finite-dimensional vector space over a field $\K$. 
Let $X \subset V^G$ be a nonzero strongly irreducible linear subshift.
 Then one has $\mdim_\FF(X) > 0$. 
 \end{corollary}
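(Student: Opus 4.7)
The plan is to apply Proposition \ref{p;mdimY<mdimX} directly with the trivial linear subshift $Y = \{0\} \subset V^G$ in the role of the proper subshift. First I would verify that $\{0\}$ is indeed a linear subshift of $V^G$: it is a vector subspace, it is closed in the prodiscrete topology, and it is $G$-invariant under the shift action (since the shift is $\K$-linear and sends $0$ to $0$).

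Next I would observe that $\mdim_\FF(\{0\}) = 0$. This is immediate from the definition \eqref{e;mdim}, since for every $j \in J$ we have $\pi_{F_j}(\{0\}) = \{0\} \subset V^{F_j}$, so $\dim(\pi_{F_j}(\{0\})) = 0$ and the $\limsup$ vanishes.

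Since $X$ is nonzero by hypothesis, the strict inclusion $\{0\} \subsetneqq X$ holds. Proposition \ref{p;mdimY<mdimX}, applied with $Y = \{0\}$, then yields
\[
0 = \mdim_\FF(\{0\}) < \mdim_\FF(X),
\]
which is the desired conclusion.

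There is essentially no obstacle here since all the work has been done in Proposition \ref{p;mdimY<mdimX}; the corollary is just the specialization to the smallest possible proper linear subshift. The only thing worth double-checking is that the strong irreducibility hypothesis on $X$ carries over correctly so that Proposition \ref{p;mdimY<mdimX} genuinely applies, which it does since $X$ is assumed strongly irreducible and $\{0\}$ is a bona fide linear subshift strictly contained in $X$.
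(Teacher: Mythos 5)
Your proposal is correct and coincides exactly with the paper's argument, which likewise applies Proposition \ref{p;mdimY<mdimX} with $Y=\{0\}$. The extra verifications you record ($\{0\}$ is a linear subshift, $\mdim_\FF(\{0\})=0$, and $\{0\}\subsetneqq X$ since $X$ is nonzero) are all routine and consistent with what the paper leaves implicit.
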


\begin{proof}
It suffices to apply Proposition \ref{p;mdimY<mdimX} by taking $Y = \{0\}$.
\end{proof}

Corollary \ref{cor:positive-mdim-if-strongly} becomes false if we suppress the hypothesis that $X$ is strongly irreducible even   for irreducible linear subshifts of finite type as the following example shows.

\begin{example}
Take $G = \Z^2$ and the F\o lner sequence $\FF = (F_n)_{n \geq 1}$ given by 
$F_n = \{0,1,\ldots,n - 1\}^2$ for all $n \geq 1$.
Let $\K$ be a field,  $V$ a nonzero finite-dimensional vector space over $\K$,
and consider the subset $X \subset V^G$ defined by
$$
X = \{x \in V^G : x(g) = x(h) \text{ for all } g,h \in G \text{ such that } \rho(g) = \rho(h) \},
$$
where $\rho \colon \Z^2 = \Z \times \Z \to \Z$ denotes the projection onto the second factor.
In other words, $X$ consists of the configurations which are constant on each horizontal line in $\Z^2$.
Observe that $X$ is a linear subshift of finite type with defining window $D = \{(0,0),(1,0)\}$ and defining law
$L = \{y \in V^D : y(0,0) = y(1,0)\}$.
On the other hand, $X$ is irreducible.
Indeed, this immediately follows from the fact that  if $\Omega$ is a finite subset of $G$, then we can translate $\Omega$ vertically to get a subset $\Omega' \subset \Z^2$ such that $\Omega$ and $\Omega'$ have disjoint images under the projection  $\rho$.
\par
However, we have $\dim(\pi_{F_n}(X)) = n \dim(V)$ and $\vert F_n \vert = n^2$ for all $n \geq 1$  so that
$\mdim_\FF(X) = \lim_{n \to \infty} n^{-1} \dim(V) = 0$.
\end{example}

\section[The Mittag-Leffler lemma]{The Mittag-Leffler lemma and the closed image property for linear subshifts}
  \label{sec:Mittag-Leffler-CIP}

This section contains the proof of Theorem \ref{t:strongly-irr-are-L-surj}.
\par 
Let $G$ be a group and let $A$ be a set. 
\par
Suppose first that $A$ is finite and let $\tau\colon X \to A^G$ be a cellular automaton, where $X \subset A^G$ is a subshift. It immediately follows from the compactness of $X$ and the continuity of $\tau$ that the image $\tau(X)$ is closed in $A^G$ for the prodiscrete topology.
As $\tau$ is $G$-equivariant, we deduce that $\tau(X)$ is a subshift of $A^G$.
\par
If $G$ contains an element of infinite order and $A$ is infinite
then one can construct a cellular automaton $\tau \colon A^G \to A^G$
whose image is not closed in $A^G$ (see \cite[Corollary 1.4]{periodic}).
Similarly, if $G$ contains an element of infinite order and $V$ is an infinite-dimensional vector space then one can construct a linear cellular automaton
$\tau \colon V^G \to V^G$ whose image is not closed in $V^G$ (see \cite[Theorem 1.3]{periodic}).  
\par
On the other hand, if $A = V$ is a finite-dimensional vector space over a field $\K$ and
$\tau \colon V^G \to V^G$ is a linear cellular automaton, then the image of $\tau$ is closed in $V^G$ (see \cite[Lemma 3.1]{garden} for $G$ countable and \cite[Corollary 1.6]{induction} in the general case, see also \cite[Section 4.D]{gromov-esav}). As $\tau$ is $G$-equivariant and $\K$-linear, this implies that $\tau(V^G)$ is a linear subshift of $V^G$.
\par 
In this section we extend this last result to linear cellular automata $\tau \colon X \to V^G$, where $G$ is a countable  group, $V$ is a finite-dimensional vector space, and $X \subset V^G$ is a linear subshift.
The key point in the proof relies in a general well known result, namely the Mittag-Leffler lemma for projective sequences of sets. This version of the Mittag-Leffler lemma  may be easily deduced from Theorem 1 in \cite[TG II. Section 5]{bourbaki-top-gen} (see also \cite[Section I.3]{grothendieck-ega-3}). We give an independent proof here for the convenience of the reader.
Let us first recall a few facts about projective limits of projective sequences in the category of sets.
\par
Let $\N$ denote the set of nonnegative integers.
A \emph{projective sequence} of sets  consists of a sequence $(X_n)_{n \in \N}$ of sets together with maps $f_{nm} \colon X_m \to X_n$ defined for all $ m \geq n$
which  satisfy the following conditions:

\begin{enumerate}[(PS-1)]
\item
$f_{n n}$ is the identity map on $X_n$ for all $n \in \N$;
\item
$f_{n k} = f_{n m} \circ f_{m k}$ for all $n,m,k \in \N$ such that $k \geq m \geq n$.
\end{enumerate}
Such a projective sequence will be denoted $(X_n,f_{n m})$ or simply $(X_n)$.
The \emph{projective limit} $\varprojlim X_n$ of the projective sequence $(X_n,f_{n m})$ is the subset of $\prod_{n \in \N} X_n$ consisting of the sequences $(x_n)_{n \in \N}$ satisfying 
$x_n = f_{n m}(x_m)$ for all $n,m \in \N$ such that $n \leq m$.
\par	
We say that the projective sequence $(X_n)$ satisfies the \emph{Mittag-Leffler condition} if,
for each $n \in \N$, there exists $m \in \N$ such that $f_{n k}(X_k) = f_{n m}(X_m)$ for all $k \geq m$.

\begin{lemma}[Mittag-Leffler]
\label{l;ML}
If  $(X_n,f_{n m})$ is a projective system of nonempty sets
which satisfies the Mittag-Leffler condition
then its projective limit $X = \varprojlim X_n$ is not empty.
\end{lemma}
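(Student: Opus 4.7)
The plan is to replace each $X_n$ by its ``stable image'' $Y_n \subset X_n$ obtained by intersecting the decreasing family of images $f_{nk}(X_k)$ for $k \geq n$, and to show that on the $Y_n$'s the transition maps become surjective. Once that is done, constructing an element of $\varprojlim X_n$ is a routine inductive choice.

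More precisely, I would first observe that the family $\bigl(f_{nk}(X_k)\bigr)_{k \geq n}$ is decreasing in $k$, since $f_{nk} = f_{nm} \circ f_{mk}$ for $k \geq m \geq n$ gives $f_{nk}(X_k) \subset f_{nm}(X_m)$. Define
\[
Y_n = \bigcap_{k \geq n} f_{nk}(X_k) \subset X_n.
\]
By the Mittag-Leffler condition, for each $n$ there exists $m_0 \geq n$ such that $f_{nk}(X_k) = f_{nm_0}(X_{m_0})$ for all $k \geq m_0$, and since the family is decreasing this immediately gives $Y_n = f_{nm_0}(X_{m_0})$. As $X_{m_0}$ is nonempty, so is $Y_n$.

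Next I would verify that, for every $m \geq n$, the restriction $f_{nm}\colon Y_m \to Y_n$ is well defined and surjective. The inclusion $f_{nm}(Y_m) \subset Y_n$ follows directly from the relation $f_{nk} = f_{nm} \circ f_{mk}$ together with $f_{nm}(X_m) \subset f_{nj}(X_j)$ for $n \leq j \leq m$. For surjectivity, given $y_n \in Y_n$, apply the Mittag-Leffler condition at level $m$ to find $m_1 \geq m$ with $Y_m = f_{mm_1}(X_{m_1})$; since $y_n \in f_{nm_1}(X_{m_1})$, pick $x \in X_{m_1}$ with $f_{nm_1}(x) = y_n$ and set $y_m = f_{mm_1}(x)$. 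Then $y_m \in Y_m$ and $f_{nm}(y_m) = f_{nm_1}(x) = y_n$ by (PS-2).

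Finally, I would build an element of $\varprojlim X_n$ by recursion: choose any $y_0 \in Y_0$ (possible because $Y_0 \neq \varnothing$), and, having chosen $y_0, \ldots, y_n$ with $y_j \in Y_j$ and $f_{j,j+1}(y_{j+1}) = y_j$, use the surjectivity of $f_{n,n+1}\colon Y_{n+1} \to Y_n$ to pick $y_{n+1} \in Y_{n+1}$ with $f_{n,n+1}(y_{n+1}) = y_n$. Expanding $f_{nm}$ as the composition $f_{n,n+1} \circ f_{n+1,n+2} \circ \cdots \circ f_{m-1,m}$ and iterating shows $f_{nm}(y_m) = y_n$ for all $m \geq n$, so $(y_n)_{n \in \N}$ lies in $\varprojlim X_n$. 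The only mildly delicate step is the surjectivity of $f_{nm}|_{Y_m}$, which is where the Mittag-Leffler hypothesis is genuinely used; the rest is bookkeeping and countable dependent choice.
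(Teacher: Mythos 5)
Your proof is correct and follows essentially the same route as the paper's: your $Y_n$ are exactly the paper's sets $X_n' = \bigcap_{m \geq n} f_{nm}(X_m)$ of universal elements, and both arguments reduce the problem to showing that the induced maps $Y_{n+1} \to Y_n$ are surjective before concluding by an inductive choice. The only cosmetic difference is that you obtain $y_n \in f_{nm_1}(X_{m_1})$ directly from the definition of $Y_n$ as an intersection, whereas the paper chooses a single index $p$ at which the images stabilize for both levels $n$ and $n+1$ simultaneously; the substance is identical.
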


\begin{proof}
First observe that if $(X_n,f_{n m})$ is an arbitrary  projective sequence of sets,
then Property (PS-2) implies that, for each $n \in \N$, the sequence of sets $f_{nm}(X_m)$, $m \geq n$, is 
non-increasing.
The set $X_n' = \bigcap_{m \geq n} f_{nm}(X_m)$ is called the set of \emph{universal elements} in $X_n$ (cf. \cite{grothendieck-ega-3}).
It is clear that the map $f_{n m}$ induces by restriction a map $g_{n m} \colon X_m' \to X_n'$ for all $n \leq m$ and  that $(X_n',g_{n m})$ is a projective sequence 
having the same projective limit as the projective sequence $(X_n,f_{n m})$.
\par
Suppose now that all the sets $X_n$ are nonempty and that the projective sequence $(X_n,f_{n m})$ satisfies the Mittag-Leffler condition.
This means that, for each $n \in \N$, there is an integer $m \geq n$ such that $f_{n k}(X_k) = f_{n m}(X_m)$ for all $k \geq m$.
This implies $X_n' = f_{n m}(X_m)$ so that, in particular, the set $X_n'$ is not empty.
We claim that the map $g_{n, n + 1} \colon X_{n + 1}' \to X_n'$ is surjective for every $n \in \N$. To see this, let $n \in \N$ and $x_n' \in X_n'$. By the Mittag-Leffler condition, we can find an integer $p \geq n + 1$ such that
$f_{n k}(X_k) = f_{n p}(X_p)$ and $f_{n + 1,  k}(X_k) = f_{n + 1,p}(X_p)$ for all $k \geq p$. 
It follows that $X_n' = f_{n p}(X_p)$ and $X_{n + 1}' = f_{n + 1, p}(X_p)$. Consequently, we can find $x_p \in X_p$ such that $x_n' = f_{n p}(x_p)$.
Setting $x_{n + 1}' = f_{n + 1,p}(x_p)$, we have  $x_{n + 1}' \in X_{n + 1}'$ and 
$$
g_{n, n + 1}(x_{n +1}') = f_{n, n + 1}(x_{n +1}') = f_{n, n + 1} \circ f_{n+1,p} (x_p) = f_{n p}(x_p) = x_n'.
$$ 
This proves our claim that $g_{n, n + 1}$ is onto.
Now, as the sets $X_n'$ are nonempty,   we can construct by induction a sequence $(x_n')_{n \in \N}$ such that $x_n' = g_{n, n + 1}(x_{n +1}') $ for all $n \in \N$.
This sequence is in the projective limit $\varprojlim X_n' = \varprojlim X_n$. This shows that $\varprojlim X_n$ is not empty.    
\end{proof} 

\begin{theorem}
\label{t;closure-lin}
Let $G$ be a countable group and let $V$ be a finite-dimensional vector space over a field $\K$.
Let $\tau \colon X \to V^G$ be a linear cellular automaton, where $X \subset V^G$ is a linear subshift.  
Then $\tau(X)$ is closed in $V^G$ for the prodiscrete topology and
is therefore a linear subshift of $V^G$.
\end{theorem}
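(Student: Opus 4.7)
The plan is to deduce the result from the Mittag-Leffler lemma (Lemma \ref{l;ML}) by building a natural projective sequence of nonempty affine spaces indexed by an exhaustion of $G$. Since $G$ is countable, we enumerate $G = \{g_1, g_2, \dots\}$ and set $E_n = \{g_1,\ldots,g_n\}$, so $E_n \subset E_{n+1}$ and $\bigcup_n E_n = G$. Fix a memory set $M$ for $\tau$ (we may assume $1_G \in M$) with local defining map $\mu$, and set $\Omega_n = E_n^{+M}$, which is finite by Proposition \ref{p:prop-int-ferm-bord}. Since the local rule of $\tau$ only reads $x$ on translates $gM$ of $M$ and $gM \subset \Omega_n$ for every $g \in E_n$, there is a well-defined $\K$-linear map $\tau_n \colon \pi_{\Omega_n}(X) \to V^{E_n}$ such that $\pi_{E_n} \circ \tau = \tau_n \circ \pi_{\Omega_n}$ on $X$.

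Now fix $y$ in the prodiscrete closure of $\tau(X)$; we must show $y \in \tau(X)$. Define
\[
X_n = \tau_n^{-1}(y\vert_{E_n}) \subset \pi_{\Omega_n}(X) \subset V^{\Omega_n}.
\]
Each $X_n$ is an affine subspace of the finite-dimensional $\K$-vector space $V^{\Omega_n}$. Because $y$ is in the closure of $\tau(X)$, for every $n$ there is some $x \in X$ with $\tau(x)\vert_{E_n} = y\vert_{E_n}$, and then $\pi_{\Omega_n}(x) \in X_n$, so $X_n \neq \varnothing$. For $m \geq n$ the natural projection $V^{\Omega_m} \to V^{\Omega_n}$ (using $\Omega_n \subset \Omega_m$) restricts to a $\K$-affine map $f_{nm} \colon X_m \to X_n$, and the collection $(X_n, f_{nm})$ is a projective sequence of nonempty sets.

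Next I verify the Mittag-Leffler condition. For each fixed $n$, the sequence of subsets $f_{nk}(X_k) \subset X_n$, $k \geq n$, is non-increasing, and each term is a nonempty affine subspace of the finite-dimensional vector space $V^{\Omega_n}$. Their affine dimensions therefore form a non-increasing sequence of nonnegative integers, so they eventually stabilize; once the dimension stabilizes, the affine subspaces themselves stabilize. Thus $f_{nk}(X_k) = f_{nm}(X_m)$ for all sufficiently large $k \geq m$, which is exactly the Mittag-Leffler condition.

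By Lemma \ref{l;ML}, $\varprojlim X_n \neq \varnothing$. Pick $(u_n)_{n \in \N}$ in this projective limit. Compatibility under the projections means the $u_n$ glue to a configuration $x \in V^G$ with $\pi_{\Omega_n}(x) = u_n \in \pi_{\Omega_n}(X)$ for every $n$. Since $X$ is closed in $V^G$ for the prodiscrete topology and $\bigcup_n \Omega_n = G$, we obtain $x \in X$. Moreover, $\tau(x)\vert_{E_n} = \tau_n(u_n) = y\vert_{E_n}$ for every $n$, so $\tau(x) = y$, which proves $\tau(X)$ is closed. That $\tau(X)$ is a linear subshift then follows from the $G$-equivariance and $\K$-linearity of $\tau$ together with the fact that $X$ itself is a linear subshift. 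The main subtlety is the affine (rather than linear) nature of $X_n$, but the finite-dimensional dimension argument handles this cleanly; the rest is a bookkeeping exercise in lining up the projections so that a compatible system actually assembles into a configuration in $X$.
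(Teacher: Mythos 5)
Your proof is correct and follows essentially the same route as the paper's: truncate $\tau$ to linear maps between finite-dimensional projections, observe that the preimages of $y$'s restrictions form a projective sequence of nonempty affine subspaces, verify Mittag--Leffler by the stabilization of a nested sequence of finite-dimensional affine subspaces, and glue a point of the projective limit into a preimage of $y$ using the closedness of $X$. One cosmetic slip: with the paper's convention $E_n^{+M}=E_nM^{-1}$, the inclusion $gM\subset\Omega_n$ for $g\in E_n$ requires $M=M^{-1}$ (or taking $\Omega_n=E_nM$), which is harmless since a memory set may always be enlarged to be symmetric.
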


\begin{proof}
Since $G$ is countable, we can find a sequence $(A_n)_{n \in \N}$ of finite subsets of $G$ such
that $G = \bigcup_{n \in \N} A_n$ and $A_n \subset A_{n + 1}$ for all $n \in \N$. Let $M$ be a memory
set for $\tau$. Let $B_n = \{g \in G: gM \subset A_n\}$. Note that $G = \bigcup_{n \in \N}B_n$ and $B_n \subset B_{n + 1}$ for all $n \in \N$. Denote by $\pi_{A_n} \colon V^G \to V^{A_n}$ and $\pi_{B_n} \colon V^G \to V^{B_n}$, $n \in \N$, the corresponding projection  maps.
\par  
Since $M$ is a memory set for $\tau$, it follows from \eqref{e;local-property} that if $x$ and $x'$ are elements in $X$ such that $\pi_{A_n}(x) = \pi_{A_n}(x')$ then $\pi_{B_n}(\tau(x)) = \pi_{B_n}(\tau(x'))$. Therefore, given
$x_n \in \pi_{A_n}(X)$ and denoting by $\widetilde{x}_n$ any configuration in $X$ such that 
$\pi_{A_n}(\widetilde{x}_n) = x_n$,
the element
$$
y_n = \pi_{B_n}(\tau(\widetilde{x}_n)) \in V^{B_n}
$$ 
does not depend on the particular choice of the extension $\widetilde{x}_n$. Thus we can define a map 
$\tau_n \colon \pi_{A_n}(X) \to V^{B_n}$ by setting $\tau_n(x_n) = y_n$ for all $x_n \in \pi_{A_n}(X)$. 
It is clear that $\tau_n$ is $\K$-linear.
\par
Let now $y \in V^G$ and suppose that $y$ is in the closure of $\tau(X)$.
Then, for all $n \in \N$, there exists $z_n \in X$ such that 
\begin{equation}
\label{e;y-z-n-B-n}
\pi_{B_n}(y)=  \pi_{B_n}(\tau(z_n)).
\end{equation}
Consider, for each $n \in \N$, the affine
subspace $X_n \subset \pi_{A_n}(X)$ defined by $X_n = \tau_n^{-1}(\pi_{B_n}(y))$.
We have $X_n \not= \varnothing$ for all $n$ by \eqref{e;y-z-n-B-n}. 
For $n \leq m$, the restriction map $\pi_{A_m}(X) \to \pi_{A_n}(X)$ induces an affine map
$f_{nm} \colon X_m \to X_n$. Conditions (PS-1) and (PS-2) are trivially satisfied so that
$(X_n,f_{nm})$ is a projective sequence. We claim that $(X_n,f_{nm})$
satisfies the Mittag-Leffler condition. Indeed, consider, for all $n \leq m$, the
affine subspace $f_{nm}(X_m) \subset X_n$. We have $f_{nm'}(X_{m'})  \subset f_{nm}(X_m) $ for all
$n \leq m \leq m'$ since $f_{nm'} = f_{nm} \circ f_{mm'}$.
As the sequence  $f_{nm}(X_m)$ ($m = n,n+1,\dots$) is a
non-increasing sequence of finite-dimensional affine subspaces, it
stabilizes, i.e., for each $n \in \N$ there exists an integer $m \geq n$ such that
$f_{nk}(X_k) = f_{nm}(X_m)$ if $k \geq m$. Thus, the Mittag-Leffler condition is satisfied.
It follows from Lemma \ref{l;ML} that the projective limit $\varprojlim X_n$ is nonempty.
Choose an element $(x_n)_{n \in \N} \in \varprojlim X_n$. We have that
$x_{n + 1}$ coincides with $x_n$ on $A_n$ and that $x_n \in \pi_{A_n}(X)$ for all $n \in \N$. 
As $X$ is closed in $V^G$  and $G = \cup_{n \in \N} A_n$, we deduce that there exists a (unique) configuration 
$x \in X$ such that $x\vert_{A_n} = x_n$ for all $n$. 
We have $\tau(x)\vert_{B_n}= \tau_n(x_n) = y_n = y\vert_{B_n}$ for all $n$. 
Since $G = \cup_{n \in \N} B_n$, this shows that $\tau(x) = y$.
\end{proof}

\begin{corollary}
\label{c:mdim-strongly-countable-surj}
Let $G$ be a countable amenable group, $\FF = (F_j)_{j \in J}$ a right F\o lner net for $G$, and $V$ a 
finite-dimensional vector space over a field $\K$. 
Let $\tau \colon X \to Y$ be a linear cellular automaton, 
where $X,Y \subset V^G$ are linear subshifts such that $\mdim_\FF(X) = \mdim_\FF(Y)$ and
$Y$ is strongly irreducible.
Then the following conditions are equivalent:
\begin{enumerate}[\rm (a)]
\item 
$\tau$ is surjective;
\item 
$\mdim_\FF(\tau(X)) = \mdim_\FF(X)$.
\end{enumerate}
\end{corollary}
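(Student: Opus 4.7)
The plan is to combine two results already in hand: the closed image property for linear cellular automata from countable groups (Theorem \ref{t;closure-lin}) and the strict drop in mean dimension for proper linear subshifts of a strongly irreducible linear subshift (Proposition \ref{p;mdimY<mdimX}).

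First, the implication (a) $\Rightarrow$ (b) is immediate: if $\tau$ is surjective, then $\tau(X) = Y$, hence $\mdim_\FF(\tau(X)) = \mdim_\FF(Y) = \mdim_\FF(X)$ by hypothesis. No work is needed here.

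For the reverse implication (b) $\Rightarrow$ (a), I would argue by contradiction. Suppose $\tau$ is not surjective, so that $\tau(X) \subsetneqq Y$. Viewing $\tau$ as a linear cellular automaton $X \to V^G$ with source a linear subshift of $V^G$, the countability of $G$ together with Theorem \ref{t;closure-lin} tells us that $\tau(X)$ is closed in $V^G$ for the prodiscrete topology. Since $\tau$ is $G$-equivariant and $\K$-linear, $\tau(X)$ is a linear subshift of $V^G$ contained in $Y$. Thus $\tau(X)$ is a \emph{proper} linear subshift of the strongly irreducible linear subshift $Y$.

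Now Proposition \ref{p;mdimY<mdimX} applies directly, yielding $\mdim_\FF(\tau(X)) < \mdim_\FF(Y)$. Combined with the hypothesis $\mdim_\FF(X) = \mdim_\FF(Y)$, this gives $\mdim_\FF(\tau(X)) < \mdim_\FF(X)$, which contradicts (b). Therefore $\tau(X) = Y$ as required.

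There is no real obstacle once the preparatory machinery is in place; the essential point to check is simply that countability of $G$ is what makes Theorem \ref{t;closure-lin} available, so the closedness of $\tau(X)$ lets us invoke Proposition \ref{p;mdimY<mdimX}. Without countability one would only know that $\tau(X)$ is $G$-invariant and a vector subspace, not that it is a linear subshift in the proper (closed) sense, and Proposition \ref{p;mdimY<mdimX} requires a closed $Y \subsetneqq X$ to produce a finite window $D$ witnessing $\pi_D(Y) \subsetneqq \pi_D(X)$.
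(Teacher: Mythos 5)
Your argument is correct and is essentially the paper's own proof: the implication (a) $\Rightarrow$ (b) is immediate, and for (b) $\Rightarrow$ (a) the paper likewise invokes Theorem \ref{t;closure-lin} to see that $\tau(X)$ is a linear subshift contained in $Y$ and then Proposition \ref{p;mdimY<mdimX} to force $\tau(X) = Y$. Your closing remark on why countability is needed matches the paper's use of the closed image property exactly.
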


\begin{proof} 
The implication (a) $\Rightarrow$ (b) is trivial. Conversely, suppose that $\mdim_\FF(\tau(X)) = \mdim_\FF(X)$. 
 Theorem \ref{t;closure-lin} implies that  $\tau(X)$ is a linear subshift of $V^G$. As $\tau(X) \subset Y$,
it then follows from Proposition \ref{p;mdimY<mdimX} that $\tau(X) = Y$. Thus, $\tau$ is surjective.
\end{proof}

\begin{proof}[Proof of Theorem \ref{t:strongly-irr-are-L-surj}]
Let $X \subset V^G$ be a strongly irreducible linear subshift and suppose that $\tau \colon X \to X$ is an injective linear cellular automaton. Let us show that $\tau$ is surjective.
Let $\FF = (F_j)_{j \in J}$ be a right F\o lner net for $G$.
We know that  $\tau(X)$ is a linear subshift by Theorem \ref{t;closure-lin}.
 As $\tau$ induces a bijective linear cellular automaton from $X$ onto $\tau(X)$,
 we have $\mdim_\FF(\tau(X)) = \mdim_\FF(X)$ by using Corollary \ref{c:inv-mdim-reversible-countable}. 
 Since $X$ is strongly irreducible,
this implies that $\tau$ is surjective by Corollary \ref{c:mdim-strongly-countable-surj}.
Thus $X$ is $L$-surjunctive. 
 \end{proof}

\section{The closed image property for linear subshifts of finite type}
\label{sec:cip-sflt}

In this section we show that Theorem \ref{t;closure-lin}
remains true for any (possibly uncountable) group $G$ if we add the hypothesis that  the linear subshift $X \subset V^G$ is of finite type.
The proof relies on the fact that  a subshift of finite type can be factorized along the left cosets of any subgroup containing a defining window. 
In order to state this last result in a more precise way, 
let us first introduce some notation.
\par
Let $G$ be a group and let $A$ be a set.
Let $H$  be a subgroup of $G$ and denote by $G/H = \{gH : g \in G\}$ the set consisting of all left cosets of $H$ in $G$. For every coset $c \in G/H$, we equip the set $A^c = \prod_{g \in c} A$ with its prodiscrete topology and
we denote by $\pi_c \colon A^G \to A^c$ the projection  map. Since the cosets $c \in G/H$ form a partition of $G$, we have a natural identification of topological spaces
$$
A^G = \prod_{c \in G/H} A^c.
$$
 With this identification, we have 
$x= (x\vert_c)_{c \in G/H}$
 for each $x \in A^G$, where $x\vert_c = \pi_c(x)  \in A^c$ is the restriction of the configuration $x$ 
 to  $c$.
\par
Given a coset $c \in G/H$ and an element $g \in c$, let $\phi_g \colon H \to c$ denote the bijective map defined by $\phi_g(h) = gh$ for all $h \in H$. 
Then $\phi_g$ induces a homeomorphism  
$\phi_g^* \colon A^c \to A^H$ 
given by $\phi_g^*(y) = y \circ \phi_g$ for all $y \in A^c$.

\begin{proposition}
\label{p;factorization-subshift}
Let $G$ be a group and let $A$ be a set.
Let $X \subset A^G$ be a subshift of finite type.
Let $D \subset G$ be a defining window and  $L \subset A^D$ a defining law for $X$, so that
$X = X_G(D,L)$. 
Suppose that  $H$  is a subgroup of $G$  such that $D \subset H$.
 Then  one has 
\begin{enumerate}[\rm (i)]
 \item
 $X = \prod_{c \in G/H} X_c$, where $X_c = \pi_c(X) \subset A^c$ denotes the projection of $X$ on 
 $A^c$; 
 \item
 $X_H = X_H(D,L)$;
 \item 
$\phi_g^*(X_c)  = X_H$
for all $c \in G/H$ and $g \in c$.
\end{enumerate}
\end{proposition}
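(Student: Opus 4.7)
The central observation driving the proof is that since $D \subset H$, for every $g \in G$ the set $gD$ is contained in the single left coset $gH \in G/H$. Consequently, each local constraint $(g^{-1}x)\vert_D \in L$ defining $X$ involves only values of $x$ on one $H$-coset, so the constraints decouple across cosets. My plan is to exploit this directly for (i), then use the $G$-invariance of $X$ to deduce (iii), and finally treat (ii) by an explicit construction.

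For (i), the inclusion $X \subset \prod_{c \in G/H} X_c$ is immediate from the definition $X_c = \pi_c(X)$. For the converse, given a family $(y_c)_{c \in G/H}$ with $y_c \in X_c$, I would for each $c$ pick some $\widetilde{x}^{(c)} \in X$ with $\widetilde{x}^{(c)}\vert_c = y_c$ and then define $x \in A^G$ by $x\vert_c = y_c$. To check that $x \in X$, fix $g \in G$ and set $c = gH$; since $gD \subset c$, we have $x\vert_{gD} = \widetilde{x}^{(c)}\vert_{gD}$, and therefore $(g^{-1}x)\vert_D = (g^{-1}\widetilde{x}^{(c)})\vert_D \in L$.

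For (iii), a direct computation gives, for any $x \in X$, $g \in c$, and $h \in H$,
\[
\phi_g^*(x\vert_c)(h) = x(gh) = (g^{-1}x)(h),
\]
so $\phi_g^*(x\vert_c) = (g^{-1}x)\vert_H \in X_H$ by $G$-invariance of $X$. Surjectivity of $\phi_g^* \colon X_c \to X_H$ follows by observing that if $y' = x'\vert_H$ for some $x' \in X$, then $(gx')\vert_c \in X_c$ maps to $y'$, since $\phi_g^*((gx')\vert_c)(h) = (gx')(gh) = x'(h)$.

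For (ii), the inclusion $X_H \subset X_H(D,L)$ follows from the observation that if $y = x\vert_H$ for some $x \in X$, then for all $h \in H$ we have $hD \subset H$ and $(h^{-1}y)\vert_D = (h^{-1}x)\vert_D \in L$. For the reverse inclusion $X_H(D,L) \subset X_H$, given $y \in X_H(D,L)$, I would choose a system of coset representatives $\{g_c\}_{c \in G/H}$ with $g_H = 1_G$, and define $x \in A^G$ by $x(g_c h) = y(h)$ for all $c$ and all $h \in H$. Any $g' \in G$ factors uniquely as $g' = g_c h'$ with $c = g'H$ and $h' \in H$, which gives $(g'^{-1}x)\vert_D(d) = x(g_c h' d) = y(h' d) = (h'^{-1}y)\vert_D(d) \in L$, so $x \in X$ and $x\vert_H = y$. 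No serious obstacle is anticipated: the entire argument reduces to unfolding definitions once the coset-decoupling observation from the first paragraph is in hand.
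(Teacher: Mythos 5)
Your proof is correct and follows essentially the same route as the paper's: parts (i) and (ii) use the identical coset-decoupling observation ($gD\subset gH$) and the identical construction via coset representatives. The only small difference is in (iii), where you argue directly from the $G$-invariance of $X$ and the definition of $X_c$, $X_H$ as projections, whereas the paper routes both inclusions through the characterization $X_H = X_H(D,L)$ from part (ii); your version is marginally more economical but not a different method.
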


\begin{proof}
 In order to establish (i), it suffices to show that $\prod_{c \in G/H} X_c \subset X$ since the converse inclusion is trivial.
Suppose that  $\widetilde{x} = (\widetilde{x}\vert_c)_{c \in G/H} \in \prod_{c \in G/H} X_c$. 
Let $g \in G$ and consider the left coset $c = gH$ . Then we can find $x \in X$ such that 
$ \widetilde{x}\vert_c = x\vert_c$. 
As $gD \subset gH = c$, we have
$$
(g^{-1}\widetilde{x})(d) = \widetilde{x}(gd) =   x(gd) = (g^{-1}x)(d)
$$
for all $d \in D$.
It follows that $(g^{-1}\widetilde{x})\vert_{D} = (g^{-1}x)\vert_{D} \in L$ for all $g \in G$.
We deduce that $\widetilde{x} \in X_G(D,L) = X$. This completes the proof of (i).
\par
If $x \in X$ then $(h^{-1}x\vert_H)\vert_D = (h^{-1}x)\vert_D \in L$ for all $h \in H$. Thus, we have $X_H \subset X_H(D,L)$.
Conversely, suppose that $y \in X_H(D,L)$.
Choose a complete set of representatives $R \subset G$ for the left cosets of $H$ in $G$ and consider the configuration $x \in A^G$ defined by
$x(rh) = y(h)$ for all $r \in R$ and $h \in H$. Then we clearly have $x \in X_G(D,L) = X$ and $x\vert_H = y$. Thus $X_H(D,L) \subset X_H$. This completes the proof of (ii).
\par
Let now $c \in G/H$ and $g \in c$. In order to prove
 \begin{equation}
\label{e;X-prime-contenu-X-H}
\phi_g^*(X_c)  \subset X_H,
\end{equation} 
let $y_c \in \phi_g^*(X_c)$. Then there exists a (unique) $x_c \in X_c$ such that $y_c = \phi_g^*(x_c)$. Let $x \in X$ such that $\pi_c(x) = x_c$.  
For all $h \in H$ and $d \in D$, we have
$$
(h^{-1}y_c)(d) = y_c(hd) = x_c(ghd) = x(ghd)  = (gh)^{-1}x(d),
$$
so that, $(h^{-1}y_c)\vert_{D} = ((gh)^{-1}x)\vert_{D} \in L$ since $x \in X = X_G(D,L)$. 
This shows
that $y_c \in X_H(D,L) = X_H$ and \eqref{e;X-prime-contenu-X-H} follows.
Conversely, suppose that $x_H \in X_H$ and consider the configuration
$x_c  = \phi_g^*(x_H) \in A^c$. Let us show that $x_c \in X_c$.
Since $x_H \in X_H$, we can find a configuration $x \in X$ such that $x_H = x\vert_H$. Setting $y = gx \in X$,
we have 
$$
  y(gh) = g^{-1}y(h) = x(h)   = x_H(h) = x_c(gh),
$$
for all $h \in H$. Thus $x_c = y\vert_c \in X_c$. This gives $X_H \subset \phi_g^*(X_c)$. 
From this and \eqref{e;X-prime-contenu-X-H} we finally deduce (iii).
\end{proof}

\begin{corollary}
\label{c:sft-is-infinite}
Suppose that  $G$ is a group which is not finitely generated.
Then:
\begin{enumerate}[\rm (i)]
\item
if $A$ is a set and $X \subset A^G$ is a subshift of finite type which is not reduced to a single configuration then $X$ is infinite;
\item
if $V$ is a vector space over a field $\K$ and $X \subset V^G$ is a linear subshift of finite type which is not reduced to the zero configuration then $X$ is 
infinite-dimensional (as a vector space over $\K$). 
\end{enumerate}
\end{corollary}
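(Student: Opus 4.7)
The plan is to apply Proposition \ref{p;factorization-subshift} to a subgroup $H \leq G$ containing a defining window, and exploit the non-finite-generation of $G$ to force $H$ to have infinite index.

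First I would choose a finite defining window $D \subset G$ and a defining law $L \subset A^D$ such that $X = X_G(D,L)$, and then set $H = \langle D \rangle$, the subgroup of $G$ generated by $D$. Since $D$ is finite, $H$ is finitely generated. The key step is to observe that $[G:H] = \infty$: if on the contrary $\{g_1,\dots,g_n\}$ were a complete set of left coset representatives for $H$ in $G$, then $G = \bigcup_{i=1}^n g_i H$ would be generated by the finite set $\{g_1,\dots,g_n\} \cup D$, contradicting the hypothesis that $G$ is not finitely generated. Hence $G/H$ is infinite.

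Next I would invoke Proposition \ref{p;factorization-subshift}: part (i) gives the identification $X = \prod_{c \in G/H} X_c$, and part (iii) together with the bijectivity of each $\phi_g^*$ shows that for every $c \in G/H$ the set $X_c$ is in bijection with $X_H$. In the linear setting of (ii), one notes that $\phi_g^*$ is manifestly $\K$-linear (since $\phi_g^*(y_1 + \alpha y_2) = y_1\circ\phi_g + \alpha\, y_2\circ\phi_g$), so the bijection $X_c \to X_H$ is in fact a linear isomorphism and the product decomposition $X = \prod_{c \in G/H} X_c$ is a decomposition of $\K$-vector spaces.

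It then remains to assemble the conclusions. For (i), if $X$ contains two distinct configurations, the product decomposition forces $X_H$ to have at least two elements, and hence
\[
|X| \;=\; \prod_{c \in G/H} |X_c| \;=\; |X_H|^{|G/H|} \;\geq\; 2^{|G/H|},
\]
which is infinite because $|G/H|$ is. For (ii), if $X \neq \{0\}$, the linear isomorphism $X_c \cong X_H$ combined with the decomposition yields $\dim_\K(X_H) \geq 1$ and $\dim_\K(X) = |G/H| \cdot \dim_\K(X_H)$, which is infinite. I do not anticipate any serious obstacle: the only nontrivial input is Proposition \ref{p;factorization-subshift}, already at hand, together with the elementary group-theoretic fact that a finitely generated subgroup of finite index in $G$ makes $G$ itself finitely generated.
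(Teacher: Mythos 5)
Your proposal is correct and follows essentially the same route as the paper: take $H=\langle D\rangle$ for a defining window $D$, note that $[G:H]=\infty$ (a step the paper asserts and you rightly justify via the standard fact that a group with a finitely generated finite-index subgroup is finitely generated), and then apply Proposition \ref{p;factorization-subshift} to get $X=\prod_{c\in G/H}X_c$ with each $X_c$ (linearly) isomorphic to $X_H$. One small imprecision: for an infinite product the identity $\dim_\K(X)=|G/H|\cdot\dim_\K(X_H)$ can fail (the product can have strictly larger dimension than the direct sum), but since $X$ contains $\bigoplus_{c\in G/H}X_c$ the inequality $\dim_\K(X)\geq |G/H|$ still holds and the conclusion is unaffected.
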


\begin{proof}
Let $A$ be a set, $X \subset A^G$ a subshift of finite type, and $D \subset G$ a defining window for $X$.
Let $H$ denote the subgroup of $G$ generated by $D$.
Observe that $H$ is of infinite index in $G$ since $G$ is not finitely generated.
With the above notation, we have $X = \prod_{c \in G/H} X_c$ by Proposition \ref{p;factorization-subshift}.
Moreover, for all $c \in G/H$ and $g \in c$, we have $\phi_g^*(X_c)  = X_H$.
As all the maps $\phi_g^*$ are bijective, we deduce that $X$ is either reduced to a single configuration or infinite. This proves (i).
\par
Suppose now that $A = V$ is a vector space over some field $\K$.
Then $X_c$ is a vector subspace of $V^c$ and
$\phi_g^* \colon X_c \to X_H$ is an isomorphism of $\K$-vector spaces for all $c \in G/H$ and $g \in c$.
As $X = \prod_{c \in G/H} X_c$, we conclude that $X$ is either reduced to the zero configuration of infinite-dimensional.
This shows (ii).
\end{proof}

\begin{example}
Let $G$ be a group which is not finitely generated and let $V$ be a nonzero finite-dimensional vector space over a field $\K$. Suppose that $G_0$ is a finite index subgroup of $G$.
Consider the linear subshift  $X \subset V^G$  consisting of the configurations $x \in V^G$ which are fixed by each element of $G_0$. We clearly have $\dim(X) = [G:G_0]\dim(V) < \infty$. Thus $X$ is not of finite type by Corollary \ref{c:sft-is-infinite}.(ii).
\end{example}

\begin{theorem}
\label{t;closure-lin-2}
Let $G$ be a (possibly uncountable) group and let $V$ be a finite-dimensional vector space over a field $\K$.
Let $\tau \colon X \to V^G$ be a linear cellular automaton, where $X \subset V^G$ is a linear subshift of finite type. 
Then $\tau(X)$ is closed in $V^G$ for the prodiscrete topology
and is therefore a linear subshift of $V^G$.
\end{theorem}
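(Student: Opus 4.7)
The plan is to reduce the possibly uncountable case to the countable case already established in Theorem \ref{t;closure-lin}, by factorizing both $X$ and $\tau$ along the left cosets of a finitely generated (hence countable) subgroup of $G$ large enough to contain a defining window for $X$ \emph{and} a memory set for $\tau$.

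First, I would fix a defining window $D \subset G$ with defining law $L \subset V^D$ for $X$ (noting that $L$ is a vector subspace of $V^D$ since $X$ is a linear subshift of finite type), a memory set $M \subset G$ for $\tau$, and a linear local defining map $\mu \colon V^M \to V$. Let $H$ be the subgroup of $G$ generated by the finite set $D \cup M$; then $H$ is countable. Since $D \subset H$, Proposition \ref{p;factorization-subshift} yields the product decomposition $X = \prod_{c \in G/H} X_c$, the identification $X_H = X_H(D,L)$, and, for every $c \in G/H$ and every $g \in c$, a homeomorphism $\phi_g^* \colon V^c \to V^H$ restricting to a bijection $X_c \to X_H$. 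Observe that $X_H$ is itself a linear subshift of $V^H$ (being cut out by the linear conditions defined by $L$), and that the same local map $\mu$ together with the memory set $M \subset H$ defines a linear cellular automaton $\tau_H \colon X_H \to V^H$ on the countable group $H$.

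Next, I would verify that $\tau$ factorizes along cosets. Since $M \subset H$, for every $c = g_0 H$ and every $g \in c$ we have $gM \subset g_0 H = c$, so $\tau(x)(g) = \mu((g^{-1}x)\vert_M)$ depends only on $x\vert_c$. This defines a $\K$-linear map $\tau_c \colon X_c \to V^c$ with $\tau(x)\vert_c = \tau_c(x\vert_c)$, and therefore $\tau(X) = \prod_{c \in G/H} \tau_c(X_c)$. A direct computation shows that $\phi_{g_c}^* \circ \tau_c = \tau_H \circ \phi_{g_c}^*$ for any chosen representative $g_c \in c$, so that $\phi_{g_c}^*(\tau_c(X_c)) = \tau_H(X_H)$ by Proposition \ref{p;factorization-subshift}(iii). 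Since $H$ is countable, Theorem \ref{t;closure-lin} applies to $\tau_H$, so $\tau_H(X_H)$ is closed in $V^H$ for the prodiscrete topology; pulling back along the homeomorphism $\phi_{g_c}^*$, each $\tau_c(X_c)$ is closed in $V^c$. Identifying $V^G$ with $\prod_{c \in G/H} V^c$ as topological vector spaces, I conclude that $\tau(X)$ is closed in $V^G$ as a product of closed sets.

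The main obstacle is largely organizational: one must make sure that the subgroup $H$ is chosen to contain both $D$ \emph{and} $M$, so that the \emph{same} local map $\mu$ defines an honest linear cellular automaton $\tau_H$ on $H$ that is compatible, via $\phi_{g_c}^*$, with the coset-wise pieces $\tau_c$. Once this compatibility is in place, the heavy analytic lifting (the Mittag-Leffler argument in Theorem \ref{t;closure-lin}) is already done, and the passage from the countable to the uncountable case reduces to the trivial fact that an arbitrary product of closed sets is closed in the product topology.
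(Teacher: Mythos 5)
Your proposal is correct and follows essentially the same route as the paper: take $H$ to be the (finitely generated, hence countable) subgroup generated by $D \cup M$, factorize $X$ and $\tau$ as products over $G/H$ using Proposition \ref{p;factorization-subshift}, conjugate each $\tau_c$ to $\tau_H$ via $\phi_g^*$, apply Theorem \ref{t;closure-lin} to the countable group $H$, and conclude since a product of closed sets is closed. No gaps.
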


\begin{proof}
Let $M \subset G$ be a memory set and $\mu \colon V^{M} \to V$ a local defining map for $\tau$. 
Also let $D \subset G$ be a defining window for $X$ and denote by $H$ the subgroup of $G$ generated by $M$ and $D$. 
Note that $H$ is finitely generated since both $M$ and $D$ are finite sets.
\par
Setting $X_c = \pi_c(X)$ for al $c \in G/H$, we have
$X = \prod_{c \in G/H} X_c$ by Proposition \ref{p;factorization-subshift}.
On the other hand, if $x \in X$, $c \in G/H$, and $g \in c$
then $\tau(x)(g)$ depends only on the restriction of  $x$ to $c$,  since $gM \subset gH = c$.
This implies that $\tau$ may be written as a product
\begin{equation}
\label{e;tau-prod}
\tau = \prod_{c \in G/H} \tau_c,
\end{equation}
where $\tau_c \colon X_c    \to V^c$ is the unique map which satisfies 
$\tau_c(x\vert_c) =  (\tau(x))\vert_c$ for all $x \in X$. 
Note that $\tau_H \colon X_H \to V^H$ is the linear cellular automaton over $H$ with memory 
set $M \subset H$ and local defining map $\mu$. 
\par  
Let us show that the maps $\tau_c$ and $\tau_H$ are conjugate by $\phi_g^*$, that is, 
\begin{equation}
\label{e;tau-c-tau-H}
\tau_c = (\phi_g^*)^{-1} \circ \tau_H \circ \phi_g^*.
\end{equation}
Let $y \in X_c$ and let $x \in X$ extending $x$.
For all $h \in H$, we have
\begin{align*}
(\phi_g^* \circ \tau_c)(y)(h) 
&= \phi_g^*(\tau_c(y))(h) \\
&= (\tau_c(y) \circ \phi_g)(h) \\
&= \tau_c(y)(gh) \\
&= \tau(x)(gh) \\
&= g^{-1}\tau(x)(h) \\
&= \tau(g^{-1}x)(h),
\end{align*}
where the last equality follows from the $G$-equivariance of $\tau$.
Now observe that the configuration $g^{-1}\widetilde{x} \in X$ extends $x \circ \phi_g \in X_H$. Thus, 
we have
$$
(\phi_g^* \circ \tau_c)(x)(h) = \tau_H(x \circ \phi_g)(h) = \tau_H(\phi_g^*(x))(h) = (\tau_H \circ \phi_g^*)(x)(h).
$$
This shows that $\phi_g^* \circ \tau_c = \tau_H \circ \phi_g^*$, which gives \eqref{e;tau-c-tau-H}
since $\phi_g^*$ is bijective.
\par
As the subgroup $H \subset G$ is finitely generated and therefore countable, we deduce from Theorem  \ref{t;closure-lin} that
$\tau_H(X_H)$ is closed in $V^H$ for the prodiscrete topology. Since $\phi_g^*$ is a homeomorphism,
it follows that 
$$
\tau_c(X_c) =  (\phi_g^*)^{-1} (\tau_H(X_H))
$$
is closed in $V^c$ for all $c \in G/H$. Thus,
$$
\tau(X) = \prod_{c \in G/H} \pi_c(\tau(X)) = \prod_{c \in G/H} \tau_c(X_c)
$$
is a closed subspace of $V^G$.
\end{proof}

\begin{corollary}
\label{c:surj-X-to-Y-strongly-irred-equiv-mdim}
Let $G$ be a (possibly uncountable) amenable group, $\FF = (F_j)_{j \in J}$ a right F\o lner net for $G$, and $V$ a 
finite-dimensional vector space over a field $\K$. 
Let $\tau \colon X \to Y$ be a linear cellular automaton, 
where $X,Y \subset V^G$ are linear subshifts satisfying $\mdim_\FF(X) = \mdim_\FF(Y)$. Suppose that
$X$ is of finite type and that $Y$ is strongly irreducible.
Then the following conditions are equivalent:
\begin{enumerate}[\rm (a)]
\item 
$\tau$ is surjective;
\item 
$\mdim_\FF(\tau(X)) = \mdim_\FF(X)$.
\end{enumerate}
\end{corollary}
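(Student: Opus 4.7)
The plan is to mimic the proof of Corollary \ref{c:mdim-strongly-countable-surj} almost verbatim, substituting the uncountable closed-image theorem (Theorem \ref{t;closure-lin-2}) for the countable version (Theorem \ref{t;closure-lin}). The hypothesis that $X$ is of finite type is precisely what is needed to make this substitution legitimate.

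The implication (a) $\Rightarrow$ (b) is immediate from the definition. For the converse, I would first invoke Theorem \ref{t;closure-lin-2}: since $X \subset V^G$ is a linear subshift of finite type and $\tau \colon X \to V^G$ (viewed as taking values in $V^G$) is a linear cellular automaton, the image $\tau(X)$ is closed in $V^G$ for the prodiscrete topology. Being also $G$-invariant and a $\K$-vector subspace, $\tau(X)$ is therefore a linear subshift of $V^G$. Since $\tau$ has codomain $Y$, we have $\tau(X) \subset Y$, so $\tau(X)$ is a linear subshift of $Y$.

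Now I apply Proposition \ref{p;mdimY<mdimX} to the strongly irreducible linear subshift $Y$: any proper linear subshift of $Y$ has strictly smaller mean dimension. Hypothesis (b) gives $\mdim_\FF(\tau(X)) = \mdim_\FF(X) = \mdim_\FF(Y)$, so the linear subshift $\tau(X) \subset Y$ cannot be proper, i.e., $\tau(X) = Y$. Thus $\tau$ is surjective.

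There is essentially no obstacle, since both ingredients (the closed image property for subshifts of finite type over uncountable groups, and the strict mean-dimension inequality for strongly irreducible linear subshifts) have already been established; the corollary is just the assembly. The only point worth double-checking is that Theorem \ref{t;closure-lin-2} applies to $\tau$ viewed as a map into $V^G$ rather than into $Y$, but this is automatic since a linear cellular automaton $\tau \colon X \to Y$ is a fortiori a linear cellular automaton $\tau \colon X \to V^G$ (the target subshift plays no role in the local-rule definition).
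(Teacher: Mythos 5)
Your proof is correct and follows the paper's argument essentially verbatim: the paper likewise applies Theorem \ref{t;closure-lin-2} to conclude that $\tau(X)$ is a linear subshift, and then uses Proposition \ref{p;mdimY<mdimX} together with $\mdim_\FF(\tau(X)) = \mdim_\FF(X) = \mdim_\FF(Y)$ to force $\tau(X) = Y$. Your closing remark that the codomain plays no role in applying the closed-image theorem is a valid (and implicit in the paper) observation.
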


\begin{proof} 
The implication (a) $\Rightarrow$ (b) is trivial. Conversely, suppose that 
$\mdim_\FF(\tau(X)) = \mdim_\FF(X)$.  Theorem \ref{t;closure-lin-2} 
  implies that $\tau(X)$ is a linear subshift of $V^G$. As $\tau(X) \subset Y$,
it then follows from Proposition \ref{p;mdimY<mdimX} that $\tau(X) = Y$. Thus, $\tau$ is surjective.
\end{proof}

 \section{Proof of the Garden of Eden theorem}
\label{sec:proof-GOE}

This section contains the proof of Theorem \ref{t;main}.
Let us start by the following:
 
\begin{theorem}
\label{t;trois}
Let $G$ be an amenable group, $\FF = (F_j)_{j \in J}$ a right F\o lner net for $G$,  and $V$ a 
finite-dimensional vector space over a field $\K$.
Let $X \subset V^G$ be a strongly irreducible linear subshift of finite type
and let $\tau \colon X \to V^G$ be a linear cellular automaton. 
Then the following conditions are equivalent:
\begin{enumerate}[\rm (a)]
\item 
$\tau$ is pre-injective;
 \item 
$\mdim_\FF(\tau(X)) = \mdim_\FF(X)$. 
\end{enumerate}
\end{theorem}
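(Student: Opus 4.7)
The plan is to prove the two implications separately. The easy bound $\mdim_\FF(\tau(X)) \leq \mdim_\FF(X)$ comes from Proposition \ref{p;tau}, so for (a) $\Rightarrow$ (b) the work is the reverse inequality; (b) $\Rightarrow$ (a) will be handled by contraposition, producing a proper vector subspace $Y \subsetneqq X$ with $\tau(Y) = \tau(X)$ and invoking Lemma \ref{l:mdimY<mdimX-strongly} to force $\mdim_\FF(\tau(X)) < \mdim_\FF(X)$.

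\textbf{(a) $\Rightarrow$ (b).} Enlarge a memory set $M$ of $\tau$ to be symmetric and contain $1_G$, and fix a symmetric finite $\Delta \ni 1_G$ witnessing $\Delta$-irreducibility of $X$. For each finite $F \subset G$ write $F^{-E} := \{g : gE \subset F\}$ for a finite $E \subset G$, and consider the finite-dimensional vector space
\[
X_{F^{-M}} := \{z \in X : \mathrm{supp}(z) \subset F^{-M}\}.
\]
Since the local map $\mu$ is linear, $\mu(0) = 0$; combined with symmetry of $M$, this forces $\mathrm{supp}(\tau(z)) \subset (F^{-M})^{+M} \subset F$ for every $z \in X_{F^{-M}}$. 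Pre-injectivity makes $\tau$ injective on the finite-support subspace $X_{F^{-M}}$, so $\dim X_{F^{-M}} \leq \dim \pi_F(\tau(X))$. On the other hand, applying $\Delta$-irreducibility with $\Omega_1 := (F^{-M})^{-\Delta}$ and $x_2 := 0$, then exhausting $G \setminus F^{-M}$ by finite $\Omega_2$ and passing to a prodiscrete limit in the closed subshift $X$, shows that every $x_1 \in X$ is matched on $(F^{-M})^{-\Delta} = F^{-\Delta M}$ by some $\tilde x \in X_{F^{-M}}$. Hence $\dim X_{F^{-M}} \geq \dim \pi_{F^{-\Delta M}}(X)$. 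Chaining with the trivial bound $\dim \pi_{F^{-\Delta M}}(X) \geq \dim \pi_F(X) - |F \setminus F^{-\Delta M}| \dim V$ and the F\o lner estimate $|F_j \setminus F_j^{-\Delta M}|/|F_j| \to 0$ (an immediate consequence of $|F_j e^{-1} \setminus F_j|/|F_j| \to 0$ summed over $e \in \Delta M$) yields $\mdim_\FF(\tau(X)) \geq \mdim_\FF(X)$.

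\textbf{(b) $\Rightarrow$ (a).} Suppose $\tau$ is not pre-injective. Pick a nonzero $z \in X$ with finite support $\Omega_0$ and $\tau(z) = 0$; after translating we may assume $1_G \in \Omega_0$, and set $v_0 := z(1_G) \neq 0$. Choose $\psi \in V^*$ with $\psi(v_0) = 1$, and let $W := \ker \psi$. Fix a symmetric $\Delta \ni 1_G$ making $X$ be $\Delta$-irreducible, set $E := \Omega_0 \cup \Delta$, $F := E E^{-1}$, and use Lemma \ref{l;tilings-exist} to obtain an $(E,F)$-tiling $T$. Define the vector subspace
\[
Y := \{x \in X : x(g) \in W \text{ for all } g \in T\}.
\]
For any $x \in X$, set $\alpha_g := \psi(x(g))$ for $g \in T$ and
\[
y := x - \sum_{g \in T} \alpha_g (gz).
\]
The translates $g\Omega_0 \subset gE$ are pairwise disjoint by property (T-1), so this sum is pointwise well-defined (at most one term contributes at each point); as a prodiscrete limit of partial sums drawn from the closed subspace $X$ we get $y \in X$. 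Continuity of $\tau$ and $\tau(gz) = g\tau(z) = 0$ give $\tau(y) = \tau(x)$, while the assumption $1_G \in \Omega_0$ gives $y(g) = x(g) - \alpha_g v_0 \in W$ for every $g \in T$, so $y \in Y$. Hence $\tau(Y) = \tau(X)$.

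Now apply Lemma \ref{l:mdimY<mdimX-strongly} with $D := \{1_G\}$ (so $D^{+\Delta} = \Delta \subset E$) and $Z := Y$: for each $g \in T$, the configuration $gz \in X$ satisfies $(gz)(g) = v_0 \notin W$, whence $\pi_{\{g\}}(Y) \subsetneqq \pi_{\{g\}}(X)$. The lemma delivers $\mdim_\FF(Y) < \mdim_\FF(X)$, and Proposition \ref{p;tau} (applied to the canonical extension of $\tau$ to $V^G \to V^G$) gives $\mdim_\FF(\tau(X)) = \mdim_\FF(\tau(Y)) \leq \mdim_\FF(Y) < \mdim_\FF(X)$, contradicting (b). The main technical obstacle is the dimension bookkeeping in (a) $\Rightarrow$ (b) --- identifying $(F^{-M})^{-\Delta}$ with $F^{-\Delta M}$ and controlling its inner boundary along the F\o lner net --- while the conceptual heart of (b) $\Rightarrow$ (a) is turning a single finite-support kernel element $z$, via the tiling $T$ and the functional $\psi$, into a uniform codimension-one reduction of $X$ along $T$ that $\tau$ cannot detect.
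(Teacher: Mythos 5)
Your implication (b) $\Rightarrow$ (a) is correct and follows the paper's route: a tiling $T$, a codimension-one cut of $X$ along $T$ that $\tau$ cannot detect, and then Lemma \ref{l:mdimY<mdimX-strongly}. Your variant with a single functional $\psi$ and $D=\{1_G\}$ works, and your gluing of the translates $gz$ as a prodiscrete limit of partial sums is legitimate (at each coordinate at most one term contributes, the partial sums lie in the vector subspace $X$, $X$ is closed, and continuity plus linearity of $\tau$ give $\tau(y)=\tau(x)$); this even avoids the paper's Lemma \ref{l;tilings-disjoints} and hence the finite-type hypothesis in that direction.

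The gap is in (a) $\Rightarrow$ (b), at the step where you claim that every $x_1 \in X$ is matched on $F^{-\Delta M}$ by some $\tilde x \in X$ supported in $F^{-M}$, ``by exhausting $G\setminus F^{-M}$ by finite $\Omega_2$ and passing to a prodiscrete limit.'' $\Delta$-irreducibility only produces, for each \emph{finite} $\Omega_2 \subset G\setminus F^{-M}$, a configuration $x_{\Omega_2}\in X$ agreeing with $x_1$ on $\Omega_1$ and vanishing on $\Omega_2$; these form a decreasing net of nonempty closed affine subsets of $X$, but $V^G$ is not compact (the field, and hence $V$, may be infinite), so nothing guarantees that their intersection is nonempty. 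For countable $G$ one could try to repair this with a Mittag-Leffler argument, but the theorem is stated for arbitrary amenable $G$. This is precisely the point where the paper invokes the finite-type hypothesis, which your proof of (a) $\Rightarrow$ (b) never uses: in Lemma \ref{lemma:strongly-irred-Omega2-inf} one applies $\Delta$-irreducibility only to the finite annulus $\Omega^{+M^3}\setminus\Omega^{+M}$, then truncates the resulting configuration to zero outside $\Omega^{+M^3}$, and verifies membership in $X$ window by window using the defining window $M^{-1}$. You need this truncation argument (or an equivalent use of the defining window) to produce the finitely supported representatives; once it is supplied, your direct counting argument, which is a mirror image of the paper's contrapositive one, goes through.
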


 For the proof of (a) $\Rightarrow$ (b) in Theorem \ref{t;trois}, we shall use the following:  

\begin{lemma}
\label{lemma:strongly-irred-Omega2-inf}
Let $G$ be a group and let $V$ be a finite-dimensional vector space over a field $\K$.
Let $X \subset V^G$ be a strongly irreducible linear subshift of finite type and
suppose that  $M$ is a finite subset of $G$ such that $X$ is $M$-irreducible, $1_G \in M$, and $M^{-1}$ is a defining window for $X$. 
Then, given any configuration $x \in X$ and any finite subset $\Omega \subset G$,
there exists a configuration $z \in X$ which coincides with $x$ on $\Omega$ and is identically zero 
on $G \setminus \Omega^{+M}$.
\end{lemma}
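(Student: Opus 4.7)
The goal is to construct $z \in X$ that looks like $x$ on $\Omega$ and vanishes outside the $M$-inflation $\Omega^{+M}$. My plan is to glue a cleverly chosen element of $X$ to the zero configuration, using a \emph{doubly inflated} buffer region that absorbs the discrepancies forced by the local defining rule $L \subset V^{M^{-1}}$.

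First I would introduce the finite set
\[
R := \Omega^{+MM^{-1}M} = \Omega M^{-1} M M^{-1},
\]
which contains $\Omega^{+M} = \Omega M^{-1}$ since $1_G \in M$. Its design criterion is the following geometric property, to be checked directly: for every $g \in G$ such that $gM^{-1}$ meets $\Omega^{+M}$, the whole set $gM^{-1}$ lies in $R$ (indeed, $gm^{-1} \in \Omega M^{-1}$ for some $m \in M$ forces $g \in \Omega M^{-1} M$, hence $gM^{-1} \subset \Omega M^{-1} M M^{-1} = R$). Next I would apply $M$-irreducibility of $X$ to the finite pair $\Omega_1 := \Omega$ and $\Omega_2 := R \setminus \Omega^{+M}$, which satisfies $\Omega_1^{+M} \cap \Omega_2 = \varnothing$, with configurations $x_1 := x$ and $x_2 := 0 \in X$ (the latter lies in $X$ because $X$ is a linear subshift). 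This produces $y \in X$ with $y|_\Omega = x|_\Omega$ and $y|_{R \setminus \Omega^{+M}} = 0$. I then define $z \in V^G$ to equal $y$ on $R$ and to vanish on $G \setminus R$; the required properties $z|_\Omega = x|_\Omega$ and $z|_{G \setminus \Omega^{+M}} = 0$ are then immediate.

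The only nontrivial step, and the main obstacle, is verifying $z \in X$: this is where the hypothesis that $M^{-1}$ is a defining window and the carefully chosen inflation $R$ interact. It suffices to show $(g^{-1}z)|_{M^{-1}} \in L$ for every $g \in G$, and I would split into three cases according to the position of $gM^{-1}$ relative to $R$. If $gM^{-1} \subset R$, then $z$ agrees with $y \in X$ on $gM^{-1}$. If $gM^{-1} \cap R = \varnothing$, then $z$ vanishes on $gM^{-1}$ and we use $0 \in X$. The delicate case is the straddling one, but here the geometric property of $R$ kicks in: if $gM^{-1}$ met $\Omega^{+M}$ it would be entirely contained in $R$, contradicting straddling; so $gM^{-1} \cap \Omega^{+M} = \varnothing$, which means every $gm^{-1} \in gM^{-1} \cap R$ actually lies in $R \setminus \Omega^{+M}$ where $y$ vanishes. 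Hence $z|_{gM^{-1}} = 0$ and we again fall back on $0 \in X$. This closes the argument.
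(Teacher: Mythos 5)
Your proof is correct and follows essentially the same strategy as the paper's: apply $M$-irreducibility to interpolate between $x$ and the zero configuration across a buffer annulus around $\Omega^{+M}$, then truncate the resulting configuration to zero outside the enlarged set and verify the defining law $L \subset V^{M^{-1}}$ one window $gM^{-1}$ at a time, falling back on $y \in X$ or $0 \in X$ in each case. The only difference is your choice of buffer $R = \Omega M^{-1}MM^{-1}$ in place of the paper's $\Omega^{+M^3} = \Omega M^{-1}M^{-1}M^{-1}$, which if anything makes the straddling-window case cleaner, since the key inclusion ``$gM^{-1}$ meets $\Omega^{+M}$ implies $gM^{-1} \subset R$'' is exactly what your $R$ is built to satisfy.
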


\begin{proof}
 Let $x \in X$ and $\Omega \subset G$ a finite subset.
Note that we have the inclusions  $\Omega \subset \Omega^{+M} \subset \Omega^{+M^2} \subset \Omega^{+M^3}$ since $1_G \in M$.
  As both $x$ and the zero configuration belong to $X$ and $X$ is $M$-irreducible, we can find a configuration $z' \in X$ which coincides with $x$ on 
  $\Omega$  and is identically zero  on $\Omega^{+M^3} \setminus \Omega^{+M}$.
Now consider the configuration $z \in V^G$ which coincides with $z'$ on $\Omega^{+M^3}$ and
is identically zero on $G \setminus \Omega^{+M^3}$.
Observe that if $g \in \Omega^{+M^2}$ then $gM^{-1} \subset \Omega^{+M^3}$ and therefore $z$ coincides with $z'$ on $gM^{-1}$, while if 
$g \in G \setminus \Omega^{+M^2}$ then $gM^{-1} \subset G \setminus \Omega^{+M}$ and therefore  
$z$ is identically zero on $gM^{-1}$.
As both $z'$ and the zero configuration belong to $X$  and  $M^{-1}$ is a defining window for $X$, we deduce that $z \in X$.
On the other hand, $z$ coincides with  $x$ on $\Omega$ and is identically zero on 
$G \setminus \Omega^{+ M}$. 
Consequently, $z$ has the required properties.
 \end{proof}

\begin{proof}[Proof of (a) $\Rightarrow$ (b) in Theorem \ref{t;trois}]
Suppose that $\mdim_\FF(\tau(X)) < \mdim_\FF(X)$. 
Let $Y=\tau(X)$.
Let $M \subset G$ be a memory set for $\tau$. Up to enlarging the subset $M$ if necessary, we can also suppose that $1_G \in M$ and that $X$ is $M$-irreducible and admits $M^{-1}$ as a defining window. 
\par
We first observe that $\pi_{F_j^{+M^2}}(Y)$ is a vector subspace of $\pi_{F_j}(Y) \times V^{F_j^{+M^2} \setminus F_j}$ so that we have
\begin{equation}
\label{e:maj-dim-pi-Fj-M2}
\dim(\pi_{F_j^{+ M^2}}(Y)) \leq \dim (\pi_{F_j}(Y)) + \vert F_j^{+M^2} \setminus F_j \vert \dim(V).
\end{equation}
On the other hand, as $(F_j)_{j \in J}$ is a right F\o lner net for $G$, we have 
 $$
\lim_j \frac{\vert F_j^{+M^2} \setminus F_j \vert}{\vert F_j \vert} = 0
$$
by \eqref{e:folner-net}. Therefore, after dividing the two sides of \eqref{e:maj-dim-pi-Fj-M2} 
by $\vert F_j \vert$ and taking the $\limsup$ over $j$, we get
$$
\limsup_j \frac{\dim(\pi_{F_j^{+ M^2}}(Y))}{\vert F_j \vert} 
\leq \limsup_j \frac{\dim(\pi_{F_j }(Y))}{\vert F_j \vert} = \mdim_\FF(Y).
$$
As $\mdim_\FF(Y) < \mdim_\FF(X)$ by our assumption, this implies that there exists $j_0 \in J$ such that 
 \begin{equation}
\label{e;U-3-leq-X}
\dim(\pi_{F_{j_0}^{+ M^2}}(Y))  < \dim(\pi_{F_{j_0}}(X)).
\end{equation}
\par
Consider now the finite-dimensional vector subspace  $Z \subset X$ consisting of all configurations 
$z \in X$ whose support 
$\{g \in G : z(g) \not= 0 \}$ is contained in $F_{j_0}^{+M}$.
 By virtue of Lemma \ref{lemma:strongly-irred-Omega2-inf}, we have
\begin{equation}
\label{e;Z-leq-X}
\pi_{F_{j_0}}(Z) = \pi_{F_{j_0}}(X).
\end{equation}
On the other hand, we deduce from
Proposition \ref{p:tau-coinc-sur-int}  that $\tau(z)$ is identically zero on $G \setminus F_{j_0}^{+M^2}$ for every $z \in Z$.
Consequently, we have 
\[
\begin{split}
\dim(\tau(Z)) & = \dim(\pi_{F_{j_0}^{+M^2}}(\tau(Z)))\\
&  \leq \dim (\pi_{F_{j_0}^{+M^2}}(Y)) \\
& < \dim(\pi_{F_{j_0}}(X)) \quad (\mbox{by }\eqref{e;U-3-leq-X})\\
& = \dim(\pi_{F_{j_0}}(Z)) \quad (\mbox{by } \eqref{e;Z-leq-X}). \\ 
 \end{split}
\]
As $\dim(\pi_{F_{j_0}}(Z)) \leq \dim(Z)$, this implies $\dim(\tau(Z)) < \dim(Z)$.
It follows that we can find two distinct configurations $z_1,z_2 \in Z$ such that $\tau(z_1) =  \tau(z_2)$.
Since all configurations in $Z$ coincide outside $F_{j_0}^{+M}$,
this shows that $\tau$ is not pre-injective.
\end{proof}

For the proof of (b) $\Rightarrow$ (a), we shall use the following:

\begin{lemma}
\label{l;tilings-disjoints}
Let $G$ be a group and let $V$ be a finite-dimensional vector space over a field $\K$.
Let $X \subset V^G$ be a linear subshift of finite type 
and let $D$ be a defining window for $X$ with $1_G \in D$. 
Let $(\Omega_i)_{i \in I}$ be
a family of subsets of $G$ such that $\Omega_i^{+D} \cap \Omega_j^{+D} = \varnothing$ for all distinct $i,j \in I$. Also let $(x_i)_{i \in I}$ be a family of configurations in $X$ such that the support of $x_i$ is contained in $\Omega_i$  for each $i \in I$. Then the configuration $x \in V^G$ defined by $x(g) = x_i(g)$ if $g \in \Omega_i$ for some (necessarily unique) $i \in I$ and $x(g) = 0$ otherwise, 
satisfies $x \in X$.
\end{lemma}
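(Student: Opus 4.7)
The plan is to verify directly that $x$ satisfies the membership condition for $X = X_G(D,L)$, namely that $(g^{-1}x)\vert_D \in L$ for every $g \in G$, where $L \subset V^D$ is a defining law of $X$ relative to the defining window $D$. The key reduction comes from the formula $\Omega_i^{+D} = \{h \in G : hD \cap \Omega_i \neq \varnothing\}$, which translates the hypothesis $\Omega_i^{+D} \cap \Omega_j^{+D} = \varnothing$ for $i \neq j$ into the statement that, for each $g \in G$, the set $gD$ meets at most one of the $\Omega_i$'s. Indeed, if $gD$ met both $\Omega_i$ and $\Omega_j$, then $g$ would lie in the forbidden intersection $\Omega_i^{+D} \cap \Omega_j^{+D}$.

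With this dichotomy in hand, I would fix an arbitrary $g \in G$ and split into two cases. In the case where $gD$ meets no $\Omega_i$, the definition of $x$ gives $x\vert_{gD} = 0$, so $(g^{-1}x)\vert_D$ is the zero pattern, which lies in $L$ because the zero configuration of $V^G$ belongs to $X$ (as $X$ is a linear subspace). In the remaining case there is a unique index $i$ with $gD \cap \Omega_i \neq \varnothing$; on this intersection $x$ agrees with $x_i$ by definition, while on $gD \setminus \Omega_i$ both $x$ and $x_i$ vanish (the former because that subset misses every $\Omega_j$ by the dichotomy just established, the latter because the support of $x_i$ lies in $\Omega_i$). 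Hence $x\vert_{gD} = x_i\vert_{gD}$, which yields $(g^{-1}x)\vert_D = (g^{-1}x_i)\vert_D \in L$ since $x_i \in X$.

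As $g \in G$ was arbitrary, these two cases together give $(g^{-1}x)\vert_D \in L$ for every $g$, and therefore $x \in X$. The argument is essentially a bookkeeping check; the only point requiring care is the translation between the disjointness of the sets $\Omega_i^{+D}$ and the fact that, for each window $gD$, the configuration $x$ coincides on $gD$ either with the zero configuration or with one of the $x_i$'s, both of which are known to lie in $X$.
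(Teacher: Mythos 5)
Your proof is correct and follows essentially the same route as the paper's: the paper's (terser) argument likewise observes that for each $g\in G$ the window $gD$ meets at most one $\Omega_i$ (since $g$ lies in at most one $\Omega_i^{+D}$), so that $x$ agrees on $gD$ either with some $x_i$ or with the zero configuration, both of which lie in $X$. You have merely spelled out the bookkeeping that the paper leaves implicit.
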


\begin{proof}
If $g \in \Omega_i^{+D}$ for some (necessarily unique) $i \in I$ then $x$ coincides with $x_i$ on $gD$.
Otherwise, $x$ is identically zero on $gD$. 
As  $D$ is a defining window for $X$, this shows that $x \in X$. 
 \end{proof}

\begin{proof}[Proof of (b) $\Rightarrow$ (a) in Theorem \ref{t;trois}]
Suppose that    $\tau$ is not pre-injective.
This means that  we can find a configuration
$x_0 \in X$ with  finite support $\Omega = \{g \in G : x_0(g) \not= 0 \} \not= \varnothing$  satisfying 
$\tau(x_0) = 0$.
Let $M$ be a memory set for $\tau$. We can also assume that  $1_G \in M$, that $M = M^{-1}$,  and that $M$ is a defining window for $X$. Let $E = \Omega^{+M^2}$. Then, by Lemma \ref{l;tiling}, we can find a finite subset $F \subset G$ and
an $(E,F)$-tiling $T \subset G$. 
Note that, for each $g \in G$, the
support of the configuration $gx_0$ is the set $g\Omega$.
As $g\Omega \subset g\Omega^{+M}$, this implies $\pi_{g\Omega^{+M}}(gx_0) \not= 0$.
 Let us choose, for
each $g \in T$, a hyperplane $H_g \subset \pi_{g \Omega^{+M}}(X)$
such that $\pi_{g\Omega^{+M}}(gx_0) \notin H_g$.
\par  
Consider now the vector subspace $Y  \subset X$ consisting
of all the configurations $y \in X$ which satisfy $\pi_{g\Omega^{+M}}(y) \in H_g$ for all $g \in T$.
We claim that $\tau(Y) = \tau(X)$. To see this, let $x$ be an arbitrary configuration in $X$.
Then, for each $g \in T$, there exists a scalar $\lambda_g \in \K$ such that
$\pi_{g\Omega^{+M}}(x + \lambda_g gx_0) \in H_g$. 
Now observe that $(g\Omega)^{+M} \cap (g'\Omega)^{+M} \subset gE \cap g'E = \varnothing$ for all distinct  $g,g' \in T$  (cf. the defining property (T-1) of a tiling in Section \ref{ss;tilings}). 
Since $X$ is of finite type with defining window $M$ and $1_G \in M$, it follows from Lemma 
\ref{l;tilings-disjoints} that we can find a configuration $x_0' \in X$ such that $\pi_{g\Omega}(x_0') =  \pi_{g\Omega}(\lambda_g gx_0)$ 
for all $g \in T$ and $x_0'$ is identically zero outside $\coprod_{g \in T} g\Omega$.
Note that in fact we have
\begin{equation}
\label{e:restriction}
\pi_{g\Omega^{+M^2}}(x_0') =  \pi_{g\Omega^{+M^2}}(\lambda_g gx_0)
\end{equation} 
for each $g \in T$, since the configuration $gx_0$ is identically zero outside $g\Omega$. 
\par
Consider the configuration $y = x + x_0'$. By construction we have $y \in Y$. Let us show that $\tau(y) = \tau(x)$.
Since $y = x$ outside $\coprod_{g \in T} g\Omega$, we deduce from Proposition \ref{p;tau} that
$\tau(y)$ and $\tau(x)$ coincide outside $\coprod_{g \in T} g\Omega^{+M}$. 
Now, if $h \in g\Omega^{+M}$ for some (necessarily unique) $g \in T$, then 
$hM = hM^{-1} \subset g\Omega^{+M^2}$ and therefore
\begin{align*} 
\tau(y)(h) &= \tau(x + x_0')(h) \\
&= \tau(x + \lambda_g gx_0)(h) \quad \text{(by \eqref{e:restriction})} \\
&= \tau(x)(h) + \lambda_g g\tau(x_0) \quad \text{(by linearity and $G$-equivariance of $\tau$)} \\
&= \tau(x)(h) \quad \text{(since $x_0$ is in the kernel of $\tau$).}
\end{align*}
Thus $\tau(x) = \tau(y)$. This proves our claim that $\tau(X) = \tau(Y)$. 
\par
Using Proposition \ref{p;tau}, we deduce that
\begin{equation}
\label{e:mdim-tauX-mdimY}
\mdim_\FF(\tau(X)) = \mdim_\FF(\tau(Y)) \leq \mdim_\FF(Y). 
\end{equation}
Now observe that, for all $g \in T$, we have $(gx_0)\vert_{\Omega^{+M}} \in
\pi_{g \Omega^{+M}}(X) \setminus \pi_{g \Omega^{+M}}(Y)$ and hence 
 \begin{equation*}
\pi_{g \Omega^{+M}}(Y) \subsetneqq \pi_{g \Omega^{+M}}(X).
\end{equation*}
Therefore, we can apply Lemma \ref{l:mdimY<mdimX-strongly}
to the strongly irreducible linear subshift $X$ and the vector subspace $Y \subset X$ by taking 
$\Delta = M$ and $D = \Omega^{+M}$.
This gives us $\mdim_\FF(Y) < \mdim_\FF(X)$ which, combined with \eqref{e:mdim-tauX-mdimY}, implies
$\mdim_\FF(\tau(X)) < \mdim_\FF(X)$.   
  \end{proof} 

This completes the proof of Theorem \ref{t;trois}.

\begin{corollary}
\label{c;deux}
Let $G$ be an amenable group, $\FF = (F_j)_{j \in J}$ a right F\o lner net for $G$,  and $V$ a 
finite-dimensional vector space over a field $\K$. 
Let $\tau \colon X \to Y$ be a linear cellular automaton, 
where $X,Y \subset V^G$ are linear subshifts satisfying $\mdim_\FF(X) = \mdim_\FF(Y)$. Suppose that $X$ is strongly irreducible of finite type and that
$Y$ is strongly irreducible.
Then the following conditions are equivalent:
\begin{enumerate}[\rm (a)]
\item 
$\tau$ is surjective;
\item 
$\mdim_\FF(\tau(X)) = \mdim_\FF(X)$; 
\item 
$\tau$ is pre-injective.
\end{enumerate}
\end{corollary}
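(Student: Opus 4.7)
The plan is to observe that the three equivalences requested follow immediately by combining two results already established in the paper, namely Theorem \ref{t;trois} and Corollary \ref{c:surj-X-to-Y-strongly-irred-equiv-mdim}. Both of these apply under the exact hypotheses in force: $X$ is strongly irreducible of finite type, $Y$ is strongly irreducible, and $\mdim_\FF(X) = \mdim_\FF(Y)$. The linear cellular automaton $\tau \colon X \to Y$ can be regarded, via the inclusion $Y \hookrightarrow V^G$, as a linear cellular automaton $\tau \colon X \to V^G$, so Theorem \ref{t;trois} applies.

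For the equivalence (b) $\Leftrightarrow$ (c), I would simply invoke Theorem \ref{t;trois}: since $X$ is a strongly irreducible linear subshift of finite type and $\tau$ is a linear cellular automaton with source $X$, the theorem gives that $\tau$ is pre-injective if and only if $\mdim_\FF(\tau(X)) = \mdim_\FF(X)$. Note that this equivalence uses only the hypotheses on $X$ and does not need anything about $Y$.

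For the equivalence (a) $\Leftrightarrow$ (b), I would invoke Corollary \ref{c:surj-X-to-Y-strongly-irred-equiv-mdim}, whose hypotheses are precisely: $X$ is a linear subshift of finite type, $Y$ is a strongly irreducible linear subshift, $\mdim_\FF(X) = \mdim_\FF(Y)$, and $\tau \colon X \to Y$ is a linear cellular automaton. All of these are satisfied here (the finite type and strong irreducibility of $X$ both being assumed, the latter stronger than what the corollary requires on $X$). The corollary then yields that $\tau$ is surjective if and only if $\mdim_\FF(\tau(X)) = \mdim_\FF(X)$.

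Combining the two equivalences (b) $\Leftrightarrow$ (c) and (a) $\Leftrightarrow$ (b) closes the triangle and completes the proof. There is no real obstacle here: all of the work has been done in Theorem \ref{t;trois} (whose proof required Lemma \ref{l:mdimY<mdimX-strongly} together with the construction based on $M$-irreducibility for (a) $\Rightarrow$ (b), and the tiling/hyperplane argument for (b) $\Rightarrow$ (a)) and in Corollary \ref{c:surj-X-to-Y-strongly-irred-equiv-mdim} (which relies on the closed image property Theorem \ref{t;closure-lin-2} together with Proposition \ref{p;mdimY<mdimX}). The only thing to check is that the hypotheses line up, which they do verbatim.
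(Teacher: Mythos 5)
Your proof is correct and is essentially identical to the paper's own argument: the paper likewise deduces (a) $\Leftrightarrow$ (b) from Corollary \ref{c:surj-X-to-Y-strongly-irred-equiv-mdim} and (b) $\Leftrightarrow$ (c) from Theorem \ref{t;trois}. Your additional verification that the hypotheses match is accurate but adds nothing beyond what the paper implicitly assumes.
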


\begin{proof}
The equivalence of conditions (a) and (b) follows from 
Corollary \ref{c:surj-X-to-Y-strongly-irred-equiv-mdim}. 
The equivalence between conditions (b) and (c) follows from Theorem \ref{t;trois}.
\end{proof}

\begin{proof}[Proof of Theorem \ref{t;main}]
This follows immediately from the equivalence between conditions (a) and (c) in Corollary \ref{c;deux} by taking $X = Y$.
\end{proof}





\section{Pre-injective but not surjective linear cellular automata}
\label{sec:pre-inj-but-not-surj}

  In this section we give examples of pre-injective but not surjective linear cellular automata 
$\tau \colon X \to X$, where 
 $G$ is a group, $V$ is a finite-dimensional vector space, and $X \subset V^G$ is a linear subshift.
 We  recall that Theorem \ref{t;main} implies that there is no such example with $X$ strongly irreducible of finite type,
and in particular with $X = V^G$,
if the group $G$ is amenable.
When $G$ contains a nonabelian free subgroup and $\dim(V) = 2$,
one can construct a linear cellular automaton $\tau \colon V^G \to V^G$ which is 
 pre-injective but not surjective. This was done in \cite[Example 4.10]{semi-simple} for free groups of rank $2$  in a more general setting, namely for linear cellular automata whose alphabet is a module over any nonzero  ring. 
 
 \begin{proposition}
\label{p:lca-pre-inj-not-surj}
Let $G$ be a group and let $V$ be a $2$-dimensional vector space over a field $\K$.
Suppose that $G$ contains a nonabelian free subgroup (e.g. $G$ is a nonabelian free group). 
 Then there exists a linear cellular automaton $\tau\colon V^G \to V^G$ which is 
 pre-injective but not surjective.
\end{proposition}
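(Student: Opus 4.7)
The plan is to reduce to the case $G = F_2$ (free group of rank $2$) by using the coset decomposition of $G$ along any embedded copy $H \cong F_2$. Fix such an $H$ and a set $R \subset G$ of representatives for the left cosets, so $G = \bigsqcup_{r \in R} rH$. This yields the identification $V^G \cong \prod_{r \in R} V^{rH}$, with each factor $V^{rH}$ identified with $V^H$ via the bijection $h \mapsto rh$. Any linear cellular automaton $\tau_0 \colon V^H \to V^H$ whose memory set $M_0$ lies inside $H$ extends to a linear cellular automaton $\tau \colon V^G \to V^G$ by the formula
\[
\tau(x)(g) = \mu_0\bigl((g^{-1}x)\vert_{M_0}\bigr) \qquad (x \in V^G,\ g \in G),
\]
where $\mu_0 \colon V^{M_0} \to V$ is a local defining map for $\tau_0$. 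A direct computation shows that $\tau$ is $G$-equivariant and that, because $M_0 \subset H$, the value of $\tau(x)$ on the coset $rH$ depends only on $x\vert_{rH}$; hence $\tau$ factors as the product $\prod_{r \in R} \tau_{rH}$, where each $\tau_{rH}$ is conjugate to $\tau_0$ via the chosen identification $V^{rH} \cong V^H$. This is the same bookkeeping as in Proposition~\ref{p;factorization-subshift}, adapted to the linear setting.

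The heart of the argument is the construction of a linear cellular automaton $\tau_0 \colon V^H \to V^H$ with $V = \K^2$ that is pre-injective but not surjective. This is carried out in \cite[Example 4.10]{semi-simple}, where one exhibits a $2 \times 2$ matrix $A \in M_2(\K[H])$ whose convolution action on $V^H = (\K^H)^2$ is the desired $\tau_0$. Pre-injectivity of $\tau_0$ corresponds to left injectivity of $A$ on the submodule $(\K[H])^2 \subset (\K^H)^2$ of finitely-supported configurations, while the failure of surjectivity exploits the paradoxical decomposition of $F_2$, which prevents $A$ from admitting a right inverse as an operator on $(\K^H)^2$.

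It remains to transfer the two properties of $\tau_0$ to the extension $\tau$. For pre-injectivity, if $x \in V^G$ has finite support and $\tau(x) = 0$, then only finitely many restrictions $x\vert_{rH}$ are nonzero and each has finite support within its coset; the cosetwise factorization $\tau = \prod_r \tau_{rH}$ together with the pre-injectivity of $\tau_0$ forces every $x\vert_{rH}$ to vanish, hence $x = 0$. For non-surjectivity, choose any $y_0 \in V^H \setminus \tau_0(V^H)$ and build $y \in V^G$ by setting $y\vert_H = y_0$ and $y\vert_{rH} = 0$ for $rH \neq H$; the product form of $\tau$ shows immediately that $y \notin \tau(V^G)$, since hitting $y$ would require $\tau_0$ to hit $y_0$.

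The main obstacle is the $F_2$-construction itself: by Theorem~\ref{t;main}, no such $\tau_0$ can exist when the ambient group is amenable, so the construction necessarily invokes a genuinely non-amenable feature of $F_2$, namely the existence of a $2 \times 2$ matrix over $\K[F_2]$ which is left injective on finitely-supported configurations but not right invertible on arbitrary configurations. The coset-extension part of the argument is routine and essentially dual to the factorization used in Section~\ref{sec:cip-sflt}.
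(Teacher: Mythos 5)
There is a genuine gap: you have outsourced the entire mathematical content of the proposition to a citation. The coset bookkeeping (extending a linear cellular automaton $\tau_0$ on $V^H$ with memory set in $H \cong F_2$ to a product automaton $\prod_{r\in R}\tau_{rH}$ on $V^G$, and transferring pre-injectivity and non-surjectivity factorwise) is correct but is the trivial part of the argument. The hard part --- actually exhibiting a pre-injective, non-surjective linear cellular automaton over $F_2$ with a two-dimensional alphabet and verifying both properties --- is replaced by a reference to \cite[Example 4.10]{semi-simple} together with an impressionistic description that does not constitute a proof. The paper, by contrast, is self-contained here: it writes down the explicit automaton $\tau(x)(g) = p_1(x(ga)) + p_2(x(gb)) + p_1(x(ga^{-1})) + p_2(x(gb^{-1}))$, where $p_1(\lambda_1,\lambda_2)=(\lambda_1,0)$ and $p_2(\lambda_1,\lambda_2)=(\lambda_2,0)$, observes that non-surjectivity is immediate because the image is contained in the proper linear subshift $(\K\times\{0\})^G$, and proves pre-injectivity by a combinatorial argument on the $4$-regular tree structure of each left coset of the free subgroup: a leaf $g_0$ of the minimal subtree spanned by the support of a putative finitely supported kernel element has at least three neighbours outside the support, and evaluating $\tau$ at a suitable such neighbour isolates $p_1(x_0(g_0))$ or $p_2(x_0(g_0))$ and yields a contradiction.

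Your sketch of the cited construction is also misleading on the one point where you do say something specific: in the actual example the failure of surjectivity does not ``exploit the paradoxical decomposition of $F_2$'' or any obstruction to right-invertibility of a matrix over $\K[F_2]$ --- it is the completely elementary fact that the image lies in a proper coordinate subshift. (The free subgroup is needed only for pre-injectivity; over an amenable group Theorem \ref{t;main} shows such an example cannot exist, but the mechanism by which freeness enters is the tree/leaf argument, not paradoxicality of the group.) Finally, note that the reduction-to-$F_2$ scaffolding, while harmless, is unnecessary: defining $\tau$ directly on $V^G$ by the same local rule, as the paper does, gives the extension for free, and the coset structure is only invoked inside the pre-injectivity argument. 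To turn your proposal into a proof you must supply the explicit automaton and the verification of its two properties.
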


\begin{proof}
We may assume $V = \K^2$.
Let $p_1$ and $p_2$ be the endomorphisms of $V$ defined respectively by
$p_1(v) = (\lambda_1,0)$ and $p_2(v) = (\lambda_2,0)$ for all $v = (\lambda_1,\lambda_2) \in V$.
Let $a$ and $b$ be two elements in $G$  generating a free subgroup of rank $2$. 
Consider the map  $\tau \colon V^G \to V^G$ given by
\begin{equation*}
 \label{e:def-tau-surj-non-preinj-2}
\tau(x)(g) = p_1(x(ga)) + p_2(x(gb)) + p_1(x(ga^{-1})) + p_2(x(gb^{-1}))
\end{equation*}
for all $x \in V^G$ and $g \in G$.  
Clearly $\tau$ is a linear cellular automaton admitting  $M = \{a,b,a^{-1},b^{-1}\}$ as a memory set.
We have $\tau(V^G) \subset (\K \times \{0\})^G \subsetneqq V^G$ so that  $\tau$ is not surjective.
\par
Let us show that $\tau$ is pre-injective. Suppose it is not. Then there exists a configuration 
$x_0 \in V^G$ with nonempty finite support $\Omega \subset G$ such that $\tau(x_0) = 0$.
Let $F$ denote the free subgroup generated by $a$ and $b$.
Choose a left coset $c_0 \in G/F$ such that $c_0$ meets $\Omega$.
The coset $c_0$ may be viewed as a regular tree of degree $4$ by joining two elements $g,h \in c_0$ if and only if $h^{-1}g \in M$.
Consider now an element $g_0 \in \Omega$ which is an ending point of the minimal tree spanned by $c_0 \cap \Omega$ in the tree $c_0$.
Observe that, among the four elements in $c_0$ which are adjacent to $g_0$, there are at least three elements outside $\Omega$.
As $g_0$ is in the support of $x_0$, we must have $p_1(x_0(g_0)) \not= 0$ or $p_2(x_0(g_0)) \not= 0$.
If $p_1(x_0(g_0)) \not= 0$, let us choose $g_1$ outside $\Omega$ such that $g_0 = g_1a$ or $g_0 = g_1a^{-1}$. This gives a contradiction since $g_0$ is then the only element in $\Omega$ which is adjacent to $g_1$ so that
\eqref{e:def-tau-surj-non-preinj-2} implies $\tau(x)(g_1) = p_1(x(g_0)) \not= 0$.
If $p_1(x_0(g_0)) = 0$ then $p_2(x_0(g_0)) \not= 0$. In this case, we choose an element $g_2$ outside $\Omega$ such that 
$g_0 = g_2b$ or $g_0 = g_2b^{-1}$. We then get $\tau(x_0(g_2)) = p_2(x_0(g_0)) \not= 0$ which yields also a contradiction. 
 \end{proof}

\begin{proposition}
\label{p:lca-pre-inj-not-surj-ls}
Let $G$ be a group and let $V$ be a one-dimensional vector space over a field $\K$.
Then:
\begin{enumerate}[\rm(i)]
\item
if $G$ is infinite, 
then there exist a linear subshift $X \subset V^G$  and a linear cellular automaton 
 $\tau\colon X \to X$ which is pre-injective but not surjective;
 \item
 if $G$ contains an infinite subgroup of infinite index,
then there exist an irreducible linear subshift $X \subset V^G$  and a linear cellular automaton 
 $\tau\colon X \to X$ which is pre-injective but not surjective;
 \item
  if $G$ is not locally finite (e.g. $G = \Z$),
then there exist a  linear subshift of finite type $X \subset V^G$  and a linear cellular automaton
 $\tau\colon X \to X$ which is pre-injective but not surjective;
 \item
 if $G$ contains an infinite finitely generated subgroup of infinite index (e.g. $G = \Z^2$),
then there exist an irreducible linear subshift of finite type $X \subset V^G$  and a linear cellular automaton 
 $\tau\colon X \to X$ which is pre-injective but not surjective.
 \end{enumerate}
 \end{proposition}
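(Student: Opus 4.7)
My plan is to handle all four cases with a single uniform construction. For an appropriately chosen subgroup $H$ of $G$, I consider the linear subshift
\[
X = \{ x \in V^G : x(g) = x(gh) \text{ for all } g \in G \text{ and all } h \in H \}
\]
of configurations that are constant on each left coset of $H$, and I take $\tau \colon X \to X$ to be the zero linear cellular automaton (with memory set $M = \{1_G\}$ and local defining map $\mu \equiv 0$). The choice of $H$ is dictated by the hypothesis: for (i) I take $H = G$ itself (which is infinite); for (ii) an infinite subgroup of infinite index; for (iii) a finitely generated infinite subgroup, which exists because $G$ is not locally finite; and for (iv) a finitely generated infinite subgroup of infinite index.

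Several easy verifications come first. $X$ is visibly a $G$-invariant $\K$-vector subspace of $V^G$, and it is closed in the prodiscrete topology since it is the intersection of coordinate equalities $x(g) = x(gh)$; hence it is a linear subshift. The map $\tau = 0$ is not surjective because $V \neq 0$ and $G/H \neq \varnothing$ force $X \neq 0$. For the core property, pre-injectivity of $\tau = 0$ is equivalent to the statement that no nonzero element of $X$ has finite support: if $x \in X$ and $x(g) \neq 0$, then $x$ is constant on $gH$ and therefore nonzero on all of $gH$, and $gH$ is infinite because $H$ is infinite, so the support of any nonzero element of $X$ is infinite. Thus $\tau = 0$ is pre-injective but not surjective.

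For cases (iii) and (iv) I must additionally show that $X$ is of finite type. If $S$ is a finite generating set for $H$ containing $1_G$, I will take $D = S$ and $L = \{y \in V^S : y(1_G) = y(s) \text{ for every } s \in S\}$; the condition that $(g^{-1}x)|_D \in L$ for every $g \in G$ says that $x(g) = x(gs)$ for all $g$ and $s$, and induction on the word length of an element of $H$ in the alphabet $S \cup S^{-1}$ shows that this forces $x(g) = x(gh)$ for every $h \in H$, so $X = X_G(D,L)$.

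The step I expect to be the main obstacle is irreducibility of $X$ in cases (ii) and (iv). Given finite $\Omega \subset G$ and $x_1, x_2 \in X$, let $C_\Omega \subset G/H$ be the finite set of cosets meeting $\Omega$; it suffices to find $g \in G$ with $gC_\Omega \cap C_\Omega = \varnothing$ in $G/H$, for then the configuration equal to $x_1$ on cosets in $C_\Omega$, equal to $x_2$ on cosets in $gC_\Omega$, and equal to zero elsewhere, lies in $X$ and has the required restrictions. The set of forbidden $g$'s is a finite union, over pairs $rH, r'H \in C_\Omega$, of the sets $\{g \in G : g \cdot rH = r'H\} = r'Hr^{-1}$, each of which is a left coset of the conjugate subgroup $rHr^{-1}$; since each such conjugate has the same (infinite) index in $G$ as $H$, B.~H.~Neumann's lemma on coverings by cosets of subgroups implies that this finite union of cosets of subgroups of infinite index cannot exhaust $G$, so a suitable $g$ exists and $X$ is irreducible, completing the proof in all four cases.
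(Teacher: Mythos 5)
Your construction is exactly the paper's: take $X$ to be the configurations constant on each left coset of a suitably chosen infinite subgroup $H$ and $\tau$ the zero cellular automaton, which is non-surjective since $X \neq 0$ and pre-injective since every nonzero element of $X$ has infinite support. The only differences are that you spell out two points the paper leaves implicit --- putting $1_G$ in the defining window so that the law $y(1_G)=y(s)$ genuinely forces constancy on all of $H$ by induction on word length, and invoking B.~H.~Neumann's lemma on coverings by cosets of infinite-index subgroups to produce the translating element $g$ in the irreducibility argument --- and both of these added details are correct.
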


\begin{proof}
Suppose that $H$ is an infinite subgroup of $G$.
Consider the subset $X \subset V^G$ consisting of the configurations $x \in V^G$ which are constant on each left coset of $H$.
Clearly $X$ is a nonzero linear subshift of $V^G$.
The linear cellular automaton $\tau \colon X \to X$ defined by $\tau(x) = 0$ for all $x \in X$ is not surjective. However, $\tau$ is pre-injective.
Indeed, as every left coset of $H$ is infinite,
any two configurations in $X$ which  are almost equal  must coincide.
We obtain (i) by taking $H = G$.
\par
If $H$ is of infinite index in $G$,
we can find, for every finite subset $\Omega \subset G$, an element $g \in G$ so that no left coset of $H$ meets both $\Omega$ and $g\Omega$. This shows that $X$ is irreducible and
(ii) follows.
\par
If $H$ admits a finite generating subset $D \subset H$, then $X$ is of finite type since $X = X_G(D,L)$, where     $L \subset V^D$ denote the vector subspace of $V^D$ consisting of all   constant maps from $D$ to $V$. This shows (iii).
\par
 Finally, if $H$ is both finitely generated and of infinite index in $G$, then $X$ is an irreducible  linear subshift of finite type by the preceding observations. This gives (iv).
     \end{proof}

 Note that none of the linear subshifts $X \subset V^G$ appearing in the proof of Proposition 
\ref{p:lca-pre-inj-not-surj-ls}
is strongly irreducible ($G$ amenable or not).
Indeed, suppose that $\Delta$ is a finite subset of $G$.
Then, as $H$ is infinite, we can find an element $h_0 \in H$ which is not in $\Delta$.
The sets $\Omega_1 = \{h_0\}$ and $\Omega_2 = \{1_G\}$ satisfy $\Omega_1^{+\Delta} \cap \Omega_2 = \varnothing$. However, if $x_1 \in V^G$ is a nonzero constant configuration, we have $x_1 \in X$ but there is no configuration $x \in X$ which coincides with $x_1$ on $\Omega_1$ and with the zero configuration on $\Omega_2$.
 
\section{Surjective but not pre-injective linear cellular automata}
\label{sec:surj-but-not-pre-linj}

 In this section we describe examples of surjective but not pre-injective linear cellular automata 
$\tau \colon X \to X$, where 
 $G$ is a group, $V$ is a finite-dimensional vector space, and $X \subset V^G$ is a linear subshift.
 We recall that Theorem \ref{t;main} implies that there is no such example with $X$ strongly irreducible of finite type,
and in particular with $X = V^G$,
if the group $G$ is amenable.
When $G$ contains a nonabelian free subgroup and $\dim(V) = 2$,
one can construct a linear cellular automaton $\tau \colon V^G \to V^G$ which is 
 surjective but not pre-injective. This was done in \cite[Example 4.11]{semi-simple} for free groups of rank $2$ in a more general setting, namely for linear cellular automata whose alphabet is a module over any nonzero ring.

 \begin{proposition}
\label{p:lca-surj-not-pre-inj}
Let $G$ be a group and let $V$ be a $2$-dimensional vector space over a field $\K$.
Suppose that $G$ contains a nonabelian free subgroup (e.g. $G$ is a nonabelian free group). 
 Then there exists a linear cellular automaton $\tau\colon V^G \to V^G$ which is surjective but not 
 pre-injective.
 \end{proposition}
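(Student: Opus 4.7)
The approach is to exhibit an explicit linear cellular automaton paralleling the one used in the proof of Proposition \ref{p:lca-pre-inj-not-surj}, but with the roles of the two coordinate projections reversed so that $\tau$ becomes surjective at the cost of acquiring a large kernel. Without loss of generality take $V = \K^2$. Let $a, b \in G$ generate a free subgroup of rank $2$, write $p_1(v) = \lambda_1$ for the first-coordinate projection of $v = (\lambda_1, \lambda_2) \in V$, and let $i_1, i_2 \colon \K \to V$ denote the coordinate inclusions $i_1(\lambda) = (\lambda, 0)$, $i_2(\lambda) = (0, \lambda)$. I would then consider the map
\[
\tau(x)(g) = i_1\bigl(p_1(x(ga)) + p_1(x(ga^{-1}))\bigr) + i_2\bigl(p_1(x(gb)) + p_1(x(gb^{-1}))\bigr),
\]
a linear cellular automaton over $V^G$ with memory set $M = \{a, b, a^{-1}, b^{-1}\}$; equivalently, the two coordinates of $\tau(x)(g)$ are $x(ga)_1 + x(ga^{-1})_1$ and $x(gb)_1 + x(gb^{-1})_1$.

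Pre-injectivity fails for a transparent reason: the output $\tau(x)$ depends only on the first coordinate of $x$, so the finitely supported configuration $x_0$ defined by $x_0(1_G) = (0,1)$ and $x_0(g) = 0$ for all $g \neq 1_G$ is a nonzero element of the kernel of $\tau$.

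For surjectivity, given $y \in V^G$ one looks for $x$ of the form $x(g) = (f(g), 0)$, so the problem reduces to finding $f \colon G \to \K$ such that
\[
f(ga) + f(ga^{-1}) = y(g)_1, \qquad f(gb) + f(gb^{-1}) = y(g)_2
\]
for every $g \in G$. Setting $F = \langle a, b\rangle$, each such equation relates values of $f$ lying in a single left coset of $F$, so the system decouples over $G/F$ and it suffices to solve it on $F$ itself. The Cayley graph of $F$ with respect to $\{a,b\}$ is the regular $4$-valent tree $T$, and I would construct $f$ by induction on word length: set $f(1_F) = 0$, freely choose two of the four sphere-one values (the other two being forced by the equations at $g = 1_F$), and then, having defined $f$ on the ball $B_n$ of radius $n$, extend it to $B_{n+1}$ as follows. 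For each vertex $g$ on the sphere $S_n$, one of the two equations at $g$ involves the letter that cancels the last letter of $g$; this equation has one of its two terms already known (the parent of $g$, which lies in $B_{n-1}$) and so determines a single new child of $g$ uniquely. The other equation relates the two remaining children of $g$ by one linear constraint, leaving one free scalar. Since $T$ has no cycles, every vertex of $S_{n+1}$ has a unique parent in $S_n$, so the local choices at different $g \in S_n$ never collide, and the induction produces the required $f$.

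The substantive point is the surjectivity claim. Pre-injectivity is immediate from the defining formula, but inverting $\tau$ amounts to solving a coupled system of second-order difference-type equations over $G$; the crucial reductions are the decoupling of this system across left cosets of the free subgroup $F$ and the exploitation of the tree structure of the Cayley graph of $F$ to propagate a solution outward from $1_F$ without encountering any consistency obstruction.
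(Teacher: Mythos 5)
Your proposal is correct and is essentially the paper's own proof: your map $i_1\circ p_1$, $i_2\circ p_1$ coincides with the paper's $q_1,q_2$, so you construct the identical cellular automaton, exhibit the same kernel element $(0,1)\delta_{1_G}$, and prove surjectivity by the same device of decoupling the linear system over left cosets of $F=\langle a,b\rangle$ and solving it by induction on word length outward from the root of the tree. The only difference is presentational: you phrase the inductive step structurally (one forced child, one free scalar per sphere vertex), whereas the paper writes out the resulting preimage by an explicit case-by-case formula.
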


\begin{proof}
We may assume $V = \K^2$.
Let $q_1$ and $q_2$ be the endomorphisms of $V$ respectively defined by
$q_1(v) = (\lambda_1,0)$ and $q_2(v) = (0,\lambda_1)$ for all $v = (\lambda_1,\lambda_2) \in V$.
 Let $a$ and $b$ be two elements in $G$  generating a free subgroup of rank $2$.
 Consider the map $\tau \colon V^G \to V^G$ given by
 $$
\tau(x)(g) = q_1(x(ga)) + q_1(x(ga^{-1})) + q_2(x(gb)) + q_2(x(gb^{-1}))
$$
for all $x \in V^G$ and $g \in G$.
Clearly $\tau$ is a linear cellular automaton admitting  $M = \{a,b,a^{-1},b^{-1}\}$ as a memory set.
The configuration which takes the value $(0,1)$ at $1_G$ and is identically zero on 
$G \setminus \{1_G\}$ has nonempty finite support and is in the kernel of $\tau$. Therefore $\tau$ is not pre-injective.
\par
Let us show that $\tau$ is onto.
Let $z = (z_1,z_2) \in V^G$. We have to show the existence of a configuration $x= (x_1,x_2) \in V^G$ such that $z = \tau(x)$.
 Let $F$ denote the free subgroup of $G$ generated by $a$ and $b$.
 For $h \in F$, we denote by $\ell(h)$ the \emph{word length} of $h$, that is, the smallest integer 
 $n \geq 0$ such that $h$ can be written as a product $h = s_1s_2\cdots s_n$, where $s_i \in M$ for $1 \leq i \leq n$.
Let $R \subset G$ be a complete set of representatives for the left cosets of $F$ in $G$
so that every element $g \in G$ can be uniquely written in the form $g = rh$ with $r \in R$ and $h \in F$.
We define $x(g)$ by induction on $\ell(h)$. 
If $\ell(h) = 0$, that is, $g \in R$, we set $x(g) = (0,0)$.
If $\ell(h) = 1$, that is, $g = rs$ for some $r \in R$ and $s \in M$, we set
$$
x(g) =
\begin{cases}
(z_1(r),0) & \text{ if } s = a, \\
(z_2(r),0) & \text{ if } s = b, \\
(0,0) & \text{ if }  s = a^{-1} \text{ or } s = b^{-1}. 
\end{cases}
$$
Suppose now that, for some integer $n \geq 2$,
the value of $x$  has been defined at each element of the form  $ rh$, where $r \in R$ and $h \in F$ satisfies 
$\ell(h) \leq n -1$.
Let $g = rh$, where $r \in R$ and $h \in H$ satisfies $\ell(h) = n$.
Then $h$ can be uniquely written in the form $h = kss'$, where $k \in F$ satisfies $\ell(k) = n - 2$ and $s,s' \in M$ are such that $ss' \not= 1_G$. 
  We  set
  $$
  x(g) =
  \begin{cases}
  (z_1(rks) - x_1(rk),0) & \text{ if } s \in \{a,a^{-1}\} \text{ and } s = s', \\
  (z_2(rks),0) & \text{ if } s \in \{a,a^{-1}\} \text{ and } s' = b,  \\
  (z_1(rk), 0) & \text{ if }  s \in \{b,b^{-1}\} \text{ and  } s' = a, \\
  (z_2(rk) - x_2(rks), 0) & \text{ if }  s \in \{b,b^{-1}\} \text{ and  } s' = s, \\
  (0,0) & \text{ otherwise.}  \\
    \end{cases}
  $$
The configuration $x$ defined in this way clearly satisfies $z = \tau(x)$.
This shows that $\tau$ is surjective. 
\end{proof}

\bibliographystyle{siam}
\bibliography{lsft}

\end{document}